\newtheorem{theorem}{Theorem}[section]
\newtheorem{corollary}[theorem]{Corollary}
\newtheorem{conjecture}[theorem]{Conjecture}
\newtheorem{lemma}[theorem]{Lemma}
\numberwithin{equation}{section}
\def\be{\begin{equation}}
\def\ee{\end{equation}}
\def\bal{\begin{aligned}}
\def\eal{\end{aligned}}
\def\bcs{\begin{cases}}
\def\ecs{\end{cases}}
\def\lb{\label}
\def\={\;=\;}
\def\+{\,+\,}
\def\-{\,-\,}
\def\to{\rightarrow}
\def\Co{\mathbb C}
\def\R{\mathbb R}
\def\Q{\mathbb Q}
\def\N{\mathbb N}
\def\Z{\mathbb Z}
\def\e{\varepsilon}
\def\diag{\rm diag}
\title[Nahm's conjecture]{Nahm's conjecture: asymptotic computations and counterexamples}
\author{Masha Vlasenko and Sander Zwegers}
\date{\today}
\address{Max-Planck-Institut f\"ur Mathematik, Vivatsgasse 7, 53111 Bonn, Germany}
\address{Mathematisches Institut, Universit\"at zu K\"oln, Weyertal 86--90, 50931 K\"oln, Germany}
\email{vlasenko@mpim-bonn.mpg.de}
\email{szwegers@uni-koeln.de}
\begin{document}
\maketitle
\section{Introduction}
Let $r\ge1$ be a positive integer, $A$ a real positive definite symmetric $r \times r$-matrix, $B$ a vector of length $r$, and $C$ a scalar. The series
\be\lb{F}
F_{A,B,C}(q) \= \sum_{n \in (\Z_{\ge0})^r} \frac{q^{\frac 12 n^T An + n^T B + C} }{(q)_{n_1}\dots (q)_{n_r}}.
\ee
converges for $|q|< 1$. Here we use the notation $(a;q)_n:=\prod_{k=1}^n(1-aq^{k-1})$ for $n\in\Z_{\geq 0} \cup \{\infty\}$ and the convention that the second argument is removed if it equals $q$ (so $(q)_n=(q;q)_n=\prod_{k=1}^n(1-q^k)$). We are concerned with the following problem due to Werner Nahm \cite{Nahm1,Nahm2,Nahm3}: describe all such $A, B$ and $C$ with rational entries for which~\eqref{F} is a modular form. In~\cite{Terhoeven1, Terhoeven2, Zagier} it was solved by Michael Terhoeven and Don Zagier for $r=1$ and the list contains seven triples $(A,B,C)\in\Q_+\times\Q\times\Q$. We develop this approach for $r>1$ and find several new examples of modular functions~\eqref{F} already for $r=2$.

Nahm has also given a conjectural criterion for a matrix $A$ to be such that there exist some $B$ and $C$ with modular $F_{A,B,C}$ (see~\cite{Nahm3}). The condition for the matrix $A$ is given in terms of solutions of a system of algebraic equations
\be\lb{ne}
1\-Q_i \= \prod_{j=1}^r Q_j^{A_{ij}}\,,\qquad i\=1,\dots,r\,.
\ee
In the last section we give several examples where the matrix $A$ doesn't satisfy the condition but corresponding modular forms exist. Certainly, it doesn't mean that the conjecture is completely wrong, rather that its correct formulation is an interesting open question.

\section{Asymptotical computations}
Let us explain a method to compute the asymptotics of~\eqref{F} when $q \to 1$. The idea comes from~\cite{Zagier}, where it is written in a very sketchy form. We denote the general term of the sum~\eqref{F} by $a_n(q)$. Suppose $q \to 1$ and $n_i \to \infty$ so that $q^{n_i} \to Q_i$ for some numbers $Q_i \notin \{0,1\}$. Then we have
\[
\frac{a_{n+e_i}}{a_{n}}\=\frac{q^{n^T A e_i + \frac12 e_i^T A e_i + e_i^T B}}{1-q^{n_i+1}} \to \frac{Q_1^{A_{i1}}\dots Q_r^{A_{ir}}}{1\-Q_i}\,,
\]
where $e_i$ is a vector whose all but $i$th coordinates are $0$ and $i$th coordinate is $1$.  We have the following statement.

\begin{lemma}\lb{Q} Let $A$ be a real positive definite symmetric $r\times r$ matrix. Then the system of equations
\[
1\-Q_i \= \prod_{j=1}^r Q_j^{A_{ij}}\,,\qquad i\=1,\dots,r
\]
has a unique solution with $Q_i\in(0,1)$ for all $1\leq i \leq r$.
\end{lemma}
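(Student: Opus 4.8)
The plan is to pass to logarithmic coordinates and exhibit the solutions as the critical points of a strictly concave function, so that existence and uniqueness both reduce to standard facts from convex analysis. First I would set $Q_i\=e^{-x_i}$, a bijection between $(0,1)^r$ and the open orthant $(0,\infty)^r$; taking logarithms, the system turns into
\be\lb{log-system}
g(x_i)\=(Ax)_i\,,\qquad i\=1,\dots,r\,,
\ee
where $g(x):=-\log(1-e^{-x})$ is strictly decreasing from $g(0^+)=+\infty$ to $g(+\infty)=0$, with $g'(x)=-1/(e^x-1)<0$. A direct coordinatewise monotonicity or fixed-point argument is awkward here, because the off-diagonal entries of $A$ may be negative and so $x\mapsto Ax$ need not be monotone; instead I would introduce the potential
\be\lb{potential}
\Phi(x)\=\sum_{i=1}^r H(x_i)\-\tfrac12\,x^TAx\,,\qquad H(x):=\int_0^x g(t)\,dt\,,
\ee
whose gradient is $\d_i\Phi = g(x_i)-(Ax)_i$. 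Thus the critical points of $\Phi$ in $(0,\infty)^r$ are exactly the solutions of~\eqref{log-system}.

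Uniqueness is the easy half. The Hessian of $\Phi$ is $\diag\bigl(g'(x_1),\dots,g'(x_r)\bigr)-A$, which is negative definite at every point of the orthant because each $g'(x_i)<0$ and $A$ is positive definite. Hence $\Phi$ is strictly concave and can have at most one critical point.

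For existence I would show that $\Phi$ attains an interior maximum. Since $g>0$ with $\int_0^\infty g(t)\,dt=\pi^2/6<\infty$, the function $H$ is increasing and bounded, hence so is $\sum_i H(x_i)$, whereas $\tfrac12\,x^TAx\ge\tfrac12\lambda_{\min}(A)\|x\|^2$; therefore $\Phi(x)\to-\infty$ as $\|x\|\to\infty$. As $H$ extends continuously to $[0,\infty)$, $\Phi$ extends continuously to the closed orthant, and by this coercivity it attains its supremum at some point $\bar x$. The remaining—and main—point is to rule out the finite boundary: if some $\bar x_i=0$, then $\d_i\Phi = g(x_i)-(Ax)_i\to+\infty$ as $x_i\to0^+$, so $\Phi$ strictly increases when the $i$th coordinate is pushed into the interior, contradicting maximality. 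Hence $\bar x\in(0,\infty)^r$ is the unique critical point, and $Q_i=e^{-\bar x_i}\in(0,1)$ solves the original system. The delicate step is precisely this boundary analysis, since $\Phi$ stays finite as $x_i\to0$ and one must invoke the blow-up of $g$ near the origin, rather than coercivity, to force the maximiser strictly inside the orthant.
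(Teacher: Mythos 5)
Your proof is correct and is essentially the paper's own argument in disguise: since $H(x)=\int_0^x -\log(1-e^{-t})\,dt = \tfrac{\pi^2}{6}-Li_2(e^{-x})$, your potential $\Phi$ equals $\tfrac{r\pi^2}{6}-f_A(x)$ where $f_A(x)=\tfrac12 x^TAx+\sum_i Li_2(e^{-x_i})$ is the function the paper minimizes, and both arguments use strict convexity/concavity for uniqueness, coercivity for an extremum on the closed orthant, and the blow-up of the partial derivative at $x_i=0$ to force the extremizer into the interior. No substantive difference.
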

\begin{proof}
We consider the function $f_A : [0,\infty)^r \rightarrow  \R$ given by
\[ f_A(x) =\frac 12 x^T A x +\sum_{i=1}^r Li_2(\exp(-x_i)),\]
where $Li_2$ is the dilogarithm function defined by the power series $Li_2(z)=\sum_{n=1}^\infty \frac{z^n}{n^2}$ for $|z|<1$. It has the property $z Li_2'(z)=-\log(1-z)$. 

The gradient and the Hessian of $f_A$ are
\[ \begin{aligned}
\nabla f_A(x) &= A x+ \left( \log(1-\exp(-x_i))\right)_{1\leq i\leq r},\\
H_{f_A} (x) &= A + \operatorname{diag} \left( \frac{1}{\exp(x_i)-1}\right)_{1\leq i\leq r}.
\end{aligned}\]

Using $Q_i = \exp(-x_i)$, the statement of the lemma is equivalent to saying that $f_A$ has a unique critical point in $(0,\infty)^r$.

First, $f_A$ has at least one critical point in $(0,\infty)^r$, because it takes on it's minimum in $(0,\infty)^r$: it's continuous, bounded from below by 0 and $f_A(x) \rightarrow \infty$ if $||x|| \rightarrow \infty$, and so it takes on it's minimum in $[0,\infty)^r$. In fact, it takes on that minimum in $(0,\infty)^r$, because 
\[ \lim_{x_i \downarrow 0} \frac{\partial f_A}{\partial x_i} (x) =-\infty<0.\]

Second, $f_A$ has at most one critical point in $(0,\infty)^r$, because it's differentiable and strictly convex on $(0,\infty)^r$: since $A$ is positive definite, we see that the Hessian $H_{f_A}(x)$ is positive definite for all $x\in (0,\infty)^r$.
\end{proof}

Consider the unique solution $Q_i\in(0,1)$ of~\eqref{ne} and let $q=e^{-\e}$, $\e>0$. Then one has
\[
\frac{a_{n+e_i}(q)}{a_{n}(q)}\approx 1 \quad \forall i \quad \text{ when } \quad n \approx \bigl(-\frac{\log Q_1}{\e},\dots,-\frac{\log Q_r}{\e}\bigr)\,,
\]
and it is very likely that $a_{n}(q)$ as a function of $n$ is maximal around this point. We will apply a version of Laplace's method to describe the asymptotics of $F_{A,B,C}(e^{-\e})$ for small $\e$. For this we need the so called polylogarithm 
\[ Li_m (z) = \sum_{n=1}^\infty \frac{z^n}{n^m} \qquad \text{for}\quad |z|<1,\ m\in\Z, \]
which satisfies the obvious relation
\[ z\frac{d}{dz} Li_m(z)  = Li_{m-1} (z).\]

\begin{lemma}\lb{qn} Let $n \in \N$ and $q=e^{-\e}$ with $\e>0$.  We fix $Q \in (0,1)$ and introduce a variable $\nu = -\log Q -n\e$. Then  

(i) for all $n,\e$ we have an inequality
\be\lb{qn1}
\log\Bigl( \frac{(q)_{\infty}}{(q)_n} \Bigr)\;<\; \-\frac{Li_2(Q)}{\e}\+\Bigl(\frac{\nu}{\e}-\frac12\Bigr)\log(1-Q)\+\frac{\nu}2\frac{Q}{1-Q}\,;
\ee
 
(ii) we have an asymptotic expansion
\be\lb{qn2}
\log\Bigl( \frac{(q)_{\infty}}{(q)_n} \Bigr) \;\sim\; -\,
\sum_{r,s\ge0} \frac{Li_{2-r-s}(Q) B_r}{r!s!} \nu^s \e^{r-1}\;\text{ when }\;\e,\nu \to 0\,,
\ee
where $B_0=1, B_1=-\frac12, B_2=\frac 16,\dots$ are the Bernoulli numbers.
\end{lemma}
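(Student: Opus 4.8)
The plan is to run both parts off a single exact identity. Writing $\log\bigl((q)_{\infty}/(q)_n\bigr)=\sum_{k>n}\log(1-q^k)$ and expanding $\log(1-q^k)=-\sum_{m\ge1}q^{km}/m$, I would interchange the two absolutely convergent sums and evaluate the geometric series in $k$ to get $-\sum_{m\ge1}q^{(n+1)m}/\bigl(m(1-q^m)\bigr)$. Using $q=e^{-\e}$ together with $q^n=Qe^{\nu}$ (which is exactly the definition of $\nu$), and $q^m/(1-q^m)=1/(e^{m\e}-1)$, this becomes
\[
\log\Bigl(\frac{(q)_{\infty}}{(q)_n}\Bigr)\;=\;-\sum_{m\ge1}\frac{Q^m e^{m\nu}}{m\,(e^{m\e}-1)}.
\]
This single formula is the engine for both statements.

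For part (ii) I would substitute the two generating-function expansions $1/(e^{m\e}-1)=\sum_{r\ge0}(B_r/r!)(m\e)^{r-1}$ and $e^{m\nu}=\sum_{s\ge0}(m\nu)^s/s!$, interchange all summations, and collect powers of $\e$ and $\nu$; the remaining inner sum is $\sum_{m\ge1}Q^m m^{r+s-2}=Li_{2-r-s}(Q)$, which reproduces exactly the claimed double series. The one genuine difficulty is that the Bernoulli series for $1/(e^{m\e}-1)$ converges only for $m\e<2\pi$, so the interchange is not literally valid — this is precisely where the asymptotic (rather than exact) nature enters. I would make it rigorous by fixing a truncation order $R$, splitting the $m$-sum at $m\approx\pi/\e$, bounding the tail $\sum_{m>\pi/\e}$ by the exponential decay of $Q^m$ (hence smaller than any power of $\e$), and on the head using the uniform Taylor remainder of $x/(e^x-1)$ on $[0,\pi]$ (the radius of convergence $2\pi$ guarantees it is $O(x^{R+1})$) together with the convergent moment sums $\sum_m Q^m m^{s+R-1}$. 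This yields an error $O(\e^R)$, exactly the next order, and establishes \eqref{qn2}.

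For part (i) I would first observe that the right-hand side of \eqref{qn1} is precisely the truncation of the series in \eqref{qn2} to the four terms $r,s\in\{0,1\}$ (indeed $Li_1(Q)=-\log(1-Q)$ and $Li_0(Q)=Q/(1-Q)$ recover the three non-leading terms). Hence the inequality is equivalent to positivity of the tail,
\[
\text{RHS}-\log\Bigl(\frac{(q)_{\infty}}{(q)_n}\Bigr)\;=\;\sum_{m\ge1}\frac{Q^m}{m}\,D(m\e,m\nu),\qquad D(x,y)=\frac{e^y}{e^x-1}-(1+y)\Bigl(\frac1x-\frac12\Bigr).
\]
A term-by-term argument fails, since $D(x,y)<0$ is possible (for $x>2$ and $y$ very negative, i.e.\ when $\nu<0$), so the sign must be extracted globally. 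The clean way, I expect, is to view everything as a function of $\nu$ with $Q,\e$ fixed: the right-hand side of \eqref{qn1} is \emph{affine} in $\nu$, whereas $\log\bigl((q)_{\infty}/(q)_n\bigr)=\sum_{j\ge1}\log\bigl(1-Qe^{\nu-j\e}\bigr)$ is a sum of \emph{strictly concave} functions of $\nu$. Therefore the difference is strictly convex in $\nu$, attains a unique minimum, and the whole inequality reduces to checking strict positivity at that single minimizing value of $\nu$ — equivalently, that the affine upper bound exceeds the concave function at the point where their slopes agree — with the strict concavity supplying the strictness.

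The main obstacle is exactly this last point in part (i): the minimizer has no closed form, so one must bound the minimal value and show it is positive for \emph{every} $\e>0$. The leading behaviour is reassuring, since near $\nu=0$ the difference equals $\tfrac12\,Li_0(Q)\,\nu^2/\e+\tfrac1{12}\,Li_0(Q)\,\e+\cdots>0$ with $Li_0(Q)=Q/(1-Q)>0$; but converting this into a bound valid for all $\e$, and not merely for small $\e$, is the crux. By contrast, part (ii) is essentially bookkeeping once the remainder estimate above is in place.
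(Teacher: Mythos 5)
Your exact identity $\log\bigl((q)_\infty/(q)_n\bigr)=-\sum_{m\ge1}\frac{Q^m e^{m\nu}}{m(e^{m\e}-1)}$ is precisely the starting point of the paper's proof, and your part (ii) (truncate the Bernoulli and exponential series, split the $m$-sum at $m\approx\delta'/\e$, kill the tail using the decay of $Q^m$, control the head by the uniform Taylor remainder of $x/(e^x-1)$) is essentially identical to the paper's argument and is sound.

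Part (i) is where the proposal has a genuine gap, and you name it yourself: the positivity of the convex difference at its minimizing value of $\nu$ is declared ``the crux'' and never established, so nothing is actually proved. Moreover, this program cannot be completed as set up, because \eqref{qn1} as stated (``for all $n,\e$'') is false in exactly the regime you flagged. Take $Q=\tfrac12$, $\e=10$, $n=1$, so $\nu=\log 2-10$: the left-hand side is $\sum_{k\ge 2}\log(1-e^{-10k})\approx -2\cdot 10^{-9}$, while the right-hand side is $-\tfrac{1}{10}Li_2(\tfrac12)+(-1.43)\log\tfrac12+\tfrac{\nu}{2}\approx -3.72$. Your own convexity picture predicts this: the affine majorant has $\nu$-slope $\frac{\log(1-Q)}{\e}+\frac{Q}{2(1-Q)}$, which is positive once $\e>-2(1-Q)\log(1-Q)/Q$, so the majorant tends to $-\infty$ as $\nu\to-\infty$ while the left-hand side tends to $0^-$. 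For what it is worth, the paper's own proof of (i) is exactly the term-by-term bound you rejected, obtained by multiplying $e^{p\nu}>1+p\nu$ with $\frac{x}{e^x-1}>1-\frac{x}{2}$ (the paper misprints the sign); that product step is valid in every case except when $1+p\nu<0$ and $\frac{1}{p\e}-\frac12<0$ hold simultaneously, i.e.\ precisely your region $x>2$, $y<-1$. So your objection to the termwise argument is well taken, but the correct conclusion is that the lemma needs a smallness hypothesis on $\e$ (the terms with $p>2/\e$ then contribute a deficit of order $|\nu|\,Q^{2/\e}$, which is absorbed by the strict gap in the $p=1$ term); only $\e\searrow 0$ is used in the application, so the theorem survives, but as written your proof of (i) is incomplete and the unrestricted statement is unprovable.
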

\begin{proof}
\[\bal
\log \Bigl(\frac{(q)_{\infty}}{(q)_n}\Bigr) &\= \sum_{s=1}^{\infty} \log\bigl(1-q^{n+s} \bigr) \= \sum_{s=1}^{\infty} \log\bigl(1- Q e^{\nu- s\e} \bigr)\\
&\= -\sum_{s=1}^{\infty} \sum_{p=1}^{\infty} \frac{Q^p e^{p(\nu-s\e)}}{p} \= - \sum_{p=1}^{\infty} \frac{Q^p}{p}\frac{e^{p\nu}}{e^{p\e}-1}
\eal
\]
Since $e^x>1+x$ for all $x\ne0$ and $\frac{x}{e^x-1}>1+\frac x2$ for $x>0$ then
\[\frac{e^{p\nu}}{e^{p\e}-1} \;>\; (1+p\nu)\Bigl(\frac1{p\e}\-\frac12\Bigr) \= \frac1{p\e}+\Bigl(\frac{\nu}{\e}-\frac12\Bigr)-p\frac{\nu}{2}\,,\]
and we get inequality~(i) after summation in $p$. To prove~(ii) we notice that for every fixed $p$ we have an asymptotic expansion
\[
\frac{p\e e^{p\nu}}{e^{p\e}-1} \;\sim\; \Bigl( \sum_{r=0}^{\infty} \frac{B_r}{r!}(p\e)^r\Bigr)\Bigl(\sum_{s=0}^{\infty} \frac{(p\nu)^s}{s!}\Bigr) \=  \sum_{r,s\ge 0} \frac{B_r}{r! s!}(p \e)^r (p\nu)^s\,,
\]
i.e.\ for every fixed $N$ and $\delta>0$ we can find $\delta' > 0$ such that 
\[\frac{\Bigl|\frac{p\e e^{p\nu}}{e^{p\e}-1}-\sum_{r+s\le N} \frac{B_r p^{r+s}}{r! s!}\e^r \nu^s\Bigr|}{p^N \max(\e,|\nu|)^N} < \delta\]
whenever $p\e, p|\nu| < \delta'$. Also we observe that when $x \searrow 0$ 
\be\lb{lim1}
\frac1{x^N} \sum_{p>\frac{\delta'}{x}} p^a Q^p \to 0
\ee
for any $a$, as well as
\be\lb{lim2}
\frac1{x^N} \sum_{p>\frac{\delta'}{x}} \frac{Q^p}{p^2}\frac{p\e e^{p\nu}}{e^{p\e}-1} \;<\;
\frac1{x^N} \sum_{p>\frac{\delta'}{x}} Q^p e^{p\nu}  \;<\; \frac1{x^N}\frac{e^{\frac{\delta'}x(\nu+\log Q)}}{1-e^{\nu+\log Q}}
\to 0
\ee
uniformly in $\nu$ in small domains. Let us choose $\delta''>0$ such that expressions~\eqref{lim1} for all integer $a$ between $-2$ and $N-2$ and also the left-hand side of~\eqref {lim2} are smaller than $\delta$ whenever $x<\delta''$ and $|\nu|<\delta''$. Now if $\max(\e,|\nu|)<\delta''$ then
\[\bal
&\frac{\Bigl|\sum_{p\ge 1} \frac{Q^p}{p^2}\Bigl(\frac{p\e e^{p\nu}}{e^{p\e}-1}-\sum_{r+s\le N} \frac{B_r p^{r+s}}{r! s!}\e^r \nu^s\Bigr)\Bigr|}{\max(\e,|\nu|)^N}
\le \delta \sum_{p \max(\e,|\nu|) < \delta'} p^{N-2}Q^p \\
&\qquad\+ \frac1{\max(\e,|\nu|)^N}\sum_{p \max(\e,|\nu|) > \delta'}  \frac{Q^p}{p^2} \frac{p\e e^{p\nu}}{e^{p\e}-1}\\
&\qquad\+ \sum_{r+s\le N} \frac{|B_r|}{r!s!}\e^r|\nu|^s \, \frac1{\max(\e,|\nu|)^N} \sum_{p \max(\e,|\nu|) > \delta'} p^{r+s} Q^p\\
&\qquad \le \Bigl(L_{2-N}(Q)+1+\sum_{r+s\le N} \frac{|B_r|}{r!s!}(\delta'')^{r+s}\Bigr)\delta
\eal \]
and~(ii) follows.
\end{proof}

\vskip 2pc

Let $B_p(X)=\sum_k \binom pk B_k X^{p-k}$, $p \ge 1$ be the Bernoulli polynomials. Consider polynomials $D_p \in \Q[B,X,T]$, $p \ge 1$ defined by the following equality of formal power series in $\e^{1/2}$:
\be\lb{Dp}\bal
\exp\Bigl[(B+\frac12\frac{Q}{1-Q})T\e^{1/2} \- \sum_{p=3}^{\infty}&\frac1{p!} B_p\bigl(\frac{T}{\e^{1/2}}\bigr) Li_{2-p}(Q)\,\e^{p-1}\Bigr]\\
& \= 1+\sum_{p=1}^{\infty} D_p\bigl(B,\frac{Q}{1-Q},T\bigr)\,\e^{p/2}\,.
\eal\ee
Observe that the coefficients of the series under the exponent are polynomials in $B,\frac{Q}{1-Q}$ and $T$ because $Li_{2-r}(Q)=P_{r-1}\Bigl(\dfrac Q{1-Q}\Bigr)$ where $P_r$, $r \ge 1$ are the polynomials defined by $P_1(X)=X$ and $P_{p+1}(X)=(X^2+X)\frac d{dX}P_{p}(X)$.

\begin{theorem}\lb{asthm}
There is an asymptotic expansion
\[
F_{A,B,C}(e^{-\e})\, e^{-\frac{\alpha}{\e}} \;\sim\; \beta e^{-\gamma \e}\Bigl(1+\sum_{p=1}^{\infty} c_p \e^p \Bigr)\,,\quad \e \searrow 0
\]
with the coefficients $\alpha \in \R_+$, $\beta,\gamma \in \overline{\Q}$ and $c_p \in \overline{\Q}$, $p \ge 1$ given below. Let $Q_i\in(0,1)$ be the solutions of~\eqref{ne}. Denote $\xi_i=\frac{Q_i}{1-Q_i}$, $\widetilde A \= A + \diag \{\xi_1,\dots,\xi_r\}$ and let $L(x)$ be the Rogers dilogarithm function. Then
\[\bal
&\alpha\=\sum_{i=1}^r\bigl(L(1)-L(Q_i)\bigr) \,>\, 0\,,\\
&\beta\=\det \widetilde A^{-1/2} \prod_i Q_i^{B_i} (1-Q_i)^{-1/2} \,,\quad \gamma\=C+\frac 1{24}\sum\frac{1+Q_i}{1-Q_i}\,,\\
&c_p \= \det \widetilde A^{1/2}(2\pi)^{-r/2} \int C_{2p}(B, \xi, t) e^{-\frac 12 t^T \widetilde A t} dt\,,\\
\eal\]
where the polynomials in $3r$ variables $C_p \in \Q[B,\xi,t]$ are defined as 
\[
 C_p(B,\xi,t) = \sum_{p_1+\dots+p_r=p} \prod_{i=1}^p D_{p_i}(B_i,\xi_i,t_i)\,,
\]
where $D_p$ are the polynomials in 3 variables defined by~\eqref{Dp}.
\end{theorem}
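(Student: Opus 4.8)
The plan is to run Laplace's method (a saddle-point analysis) directly on the lattice sum, expanding the summand around its maximum and converting the sum into a Gaussian integral. Writing $q=e^{-\e}$ and denoting the general term by $a_n(q)$, the first step is to take logarithms and use the factorization $\frac{1}{(q)_{n_i}}=\frac{1}{(q)_\infty}\cdot\frac{(q)_\infty}{(q)_{n_i}}$, so that
\be
\log a_n \= -\tfrac\e2 n^TAn - \e\, n^TB - \e C - r\log(q)_\infty + \sum_{i=1}^r\log\frac{(q)_\infty}{(q)_{n_i}}\,.
\ee
Each factor $\log\frac{(q)_\infty}{(q)_{n_i}}$ is controlled by Lemma~\ref{qn}(ii), while $\log(q)_\infty$ has the companion expansion $-\frac{Li_2(1)}{\e}+\frac12\log\frac{2\pi}{\e}+\frac\e{24}+O(e^{-c/\e})$ coming from the modular transformation of the Dedekind $\eta$-function. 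I would then substitute $n_i=-\frac{\log Q_i}{\e}-\frac{t_i}{\sqrt\e}$, i.e.\ $\nu_i=t_i\sqrt\e$ in the notation of Lemma~\ref{qn}, which centres the summand at the saddle $n_i^*=-\log Q_i/\e$ furnished by Lemma~\ref{Q}, and re-expand everything as a power series in $\e^{1/2}$ with $t$-dependent coefficients.

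The next step is the bookkeeping that isolates $\alpha,\beta,\gamma$ and the $C_p$. The $\e^{-1}$ term equals $\frac1\e\bigl(-\tfrac12(\log Q)^TA(\log Q)+rLi_2(1)-\sum_iLi_2(Q_i)\bigr)$; feeding in the relation $\log(1-Q_i)=(A\log Q)_i$ from~\eqref{ne} together with $L(x)=Li_2(x)+\tfrac12\log x\log(1-x)$ collapses this to $\alpha=\sum_i(L(1)-L(Q_i))$. The $\e^{-1/2}$ coefficient is $-(A\log Q)^Tt-\sum_i\log(1-Q_i)\,t_i$, which vanishes identically by the same relation, confirming that we expand at a critical point. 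At order $\e^0$ the $t$-quadratic part assembles into $-\tfrac12 t^T\widetilde A t$ (the diagonal $\xi_i$ arising from the $Li_0$-terms of Lemma~\ref{qn}(ii)) and the $t$-independent part into $\log\bigl(\prod_iQ_i^{B_i}(1-Q_i)^{-1/2}\bigr)$, while $-\frac r2\log\frac{2\pi}{\e}$ from $\log(q)_\infty$ supplies the prefactor $\e^{r/2}(2\pi)^{-r/2}$. The $t$-independent part of order $\e$ collects $C$, the contribution $\frac r{24}$ from $\log(q)_\infty$, and $\frac1{12}\sum_i\xi_i$ from the $Li_0$-terms into $-\gamma\e$ with $\gamma=C+\frac1{24}\sum_i\frac{1+Q_i}{1-Q_i}$ (using $\frac{1+Q_i}{1-Q_i}=1+2\xi_i$). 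Finally, a direct term-by-term match shows that the entire remaining per-variable correction of order $\ge\e^{1/2}$ is exactly the exponent in~\eqref{Dp} with $T=t_i$; exponentiating and multiplying over $i$ turns it into $1+\sum_pC_p(B,\xi,t)\e^{p/2}$.

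With these identifications $a_{n(t)}\sim e^{\alpha/\e}\,\e^{r/2}(2\pi)^{-r/2}\prod_iQ_i^{B_i}(1-Q_i)^{-1/2}\,e^{-\gamma\e}\,e^{-\frac12 t^T\widetilde A t}\bigl(1+\sum_pC_p\e^{p/2}\bigr)$, and the last formal step is to replace $\sum_n$ by $\e^{-r/2}\int_{\R^r}dt$ (the lattice spacing in $t$ being $\sqrt\e$), whereupon the two factors $\e^{\pm r/2}$ cancel and $\int e^{-\frac12 t^T\widetilde A t}dt=(2\pi)^{r/2}\det\widetilde A^{-1/2}$ produces the leading coefficient $\beta$. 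Since $C_p(B,\xi,-t)=(-1)^pC_p(B,\xi,t)$ — each $D_p$ has $t$-parity $p$, because every monomial $T^s\e^m$ in the exponent of~\eqref{Dp} has even weight $2m+s$, a property preserved under exponentiation — all odd $p$ integrate to zero against the even Gaussian, leaving precisely the powers $\e^{p}$ with coefficients $c_p=\det\widetilde A^{1/2}(2\pi)^{-r/2}\int C_{2p}e^{-\frac12 t^T\widetilde A t}dt$.

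I expect the genuine obstacle to be the rigorous justification of this Laplace approximation to all orders, rather than the algebra above. Two points need care. First, the expansion in Lemma~\ref{qn}(ii) holds only as $\nu\to0$, i.e.\ for $t=o(\e^{-1/2})$, so I would restrict the sum to a box $|t_i|<\e^{-\delta}$, use the uniform remainder there, and bound the complement with the unconditional inequality Lemma~\ref{qn}(i); the latter shows the summand is dominated by a Gaussian in $t$, so the discarded tail — including the harmless extension of the range from $n\ge0$ to all of $\Z^r$, whose boundary sits at $|t|\sim\e^{-1/2}$ — is $O(e^{-c\e^{-2\delta}})$, smaller than any power of $\e$. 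Second, replacing the sum by the integral must itself be controlled: since the summand is, to leading order, a Gaussian of width $\e^{-1/2}$ in $n$, Poisson summation (equivalently Euler--Maclaurin) shows the discrepancy is exponentially small in $\e$. Assembling these estimates so that the error at each fixed order $\e^p$ is genuinely $o(\e^p)$, uniformly in the truncation, is the delicate part; everything else reduces to the formal power-series manipulation sketched above.
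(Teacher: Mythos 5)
Your proposal follows essentially the same route as the paper: center the sum at the critical point of Lemma~\ref{Q}, control each factor $(q)_\infty/(q)_{n_i}$ with Lemma~\ref{qn}, rescale by $\sqrt{\e}$, match the corrections to the exponent in~\eqref{Dp}, pass from sum to integral by Poisson summation, and kill the odd-order terms by the parity of $C_p$; all of your order-by-order identifications of $\alpha$, $\beta$, $\gamma$ and the quadratic form $\widetilde A$ are correct (up to a sign slip in the $\e^{-1/2}$ coefficient, which should be $-(A\log Q)^T t+\sum_i\log(1-Q_i)t_i$ so that it indeed vanishes by~\eqref{ne}). The one step you explicitly leave open --- choosing the truncation so that the error at each order is genuinely $o(\e^p)$ --- is precisely where the paper does real work, and it is worth recording how: the cutoff is taken as $|t_i|<\e^{\lambda}$ in the unrescaled variable with $\lambda$ pinned in the window $-\tfrac23<\lambda<-\tfrac12$ (i.e.\ your $\delta$ must satisfy $0<\delta<\tfrac16$, not merely $\delta<\tfrac12$). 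The upper constraint $\lambda<-\tfrac12$ makes the Gaussian tail and the sum-versus-integral discrepancy~\eqref{Four} superpolynomially small; the lower constraint $\lambda>-\tfrac23$ is forced by the degree bounds $\deg_t g_p\le p+2$ and $\deg_t C_p\le 3p$ (the cubic growth coming from $B_p(t/\sqrt{\e})\e^{p-1}$), which give the uniform remainder $o(\e^{N(3\lambda+2)})$ in~\eqref{ass} with exponent $N(3\lambda+2)\to\infty$ only when $3\lambda+2>0$. Without tracking these degrees your box $|t_i|<\e^{-\delta}$ with an unspecified $\delta<\tfrac12$ would let the terms $C_p(t)\e^{p/2}$ be as large as $\e^{p(1/2-3\delta)}$, which for $\delta\ge\tfrac16$ does not even tend to zero; so the degree bookkeeping is not a routine afterthought but the hinge on which the all-orders statement turns.
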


Recall that $L(x)$ is an increasing function on $\R$ (therefore $\alpha > 0$), we have $L(x)=Li_2(x)+\frac12\log(x)\log(1-x)$ for $x \in (0,1)$  and $L(1)=\frac{\pi^2}6$.

\begin{proof} 
Let 
\[
 \alpha'\=-\sum_{i=1}^r L(Q_i)\,,\quad \beta'\= \prod_i Q_i^{B_i} (1-Q_i)^{-1/2}\,,\quad \gamma'\=C+\frac1{12}\sum_i\xi_i
\]
and $t_i \= -\frac{\log Q_i}{\e}-n_i$. Consider the function
\[
\phi(t,\e) \= \frac{(q)_{\infty}^r a_{n}(q)}{\beta' e^{\frac{\alpha'}{\e}}}\qquad(q=e^{-\e})
\]
defined only for $t \in t^0(\e)+\Z^r$ where $t^0_i(\e)$ is the fractional part of $-\frac{\log Q_i}{\e}$.  After a straightforward computation using~(i) of Lemma~\ref{qn} we obtain that
\be\lb{an1'}
\log \phi(t,\e) \,<\, \Bigl(- \frac 12 t^T A t + t^T \bigl(B+\frac12 \xi\bigr) \- C \Bigr) \e\,.
\ee
Then
\[
\frac{(q)_{\infty}^r F_{A,B,C}(q)}{\beta' \exp(\frac{\alpha'}{\e})} \= \sum_{t \in t^0 + \Z^r} \phi(t,\e) \quad \sim \sum_{t \in t^0 + \Z^r, |t_i|<{\e}^{\lambda}} \phi(t,\e) 
\]
for every $\lambda < -\frac12$, where "$\sim$" always means that the difference is $o(\e^N)$ for every $N$. Indeed, for such $\lambda$ we have $\sum_{|t_i|>{\e}^{\lambda}} \phi(t,\e) \= o(\e^N)$ for every $N$ due to~\eqref{an1'}.  We can further rewrite it as
\[\sum_{t \in t^0 + \Z^r, |t_i|<{\e}^{\lambda}} \phi(t,\e) \quad \= \sum_{t \in (t^0 + \Z^r)\sqrt{\e}, |t_i|<{\e}^{\lambda+\frac12}} \phi\bigl(\frac{t}{\sqrt{\e}},\e\bigr)\,.
\]
Let also $\lambda > -\frac23$. Then
\be\lb{ass}
\phi\bigl(\frac{t}{\sqrt{\e}},\e\bigr) \= e^{- \frac 12 t^t \widetilde A t - \gamma' \e}\Bigl(1+\sum_{p=1}^N C_p(t) \e^{p/2}\Bigr) \+ o(\e^{N(3\lambda+2)})
\ee
uniformly in the domain $|t_i|\le\e^{\lambda+\frac12}$, and we observe that  for any polynomial $P$
\be\lb{Four}
 \sum_{t \in (t^0 + \Z^r)\sqrt{\e}, |t_i|<{\e}^{\lambda+\frac12}} P(t)  e^{- \frac 12 t^T \widetilde A t} \;\sim\; \e^{-r/2} \int P(t)  e^{- \frac 12 t^T \widetilde A t} dt
\ee
(the difference is $o(\e^N)$ for every $N$) when $\lambda<-\frac12$. Combining~\eqref{ass} and~\eqref{Four} (we will prove both facts later), we get
\[
 \frac{(q)_{\infty}^r F_{A,B,C}(q)}{\beta' \exp(\frac{\alpha'}{\e}-\gamma' \e)} \;\sim\; \Bigl(\frac{2\pi}{\e}\Bigr)^{r/2} {\det\widetilde A}^{-1/2} \Bigl(1+\sum_{p=1}^{\infty} c_p \e^p \Bigr)\,.
 \]
 Here we have only integer powers of $\e$ because $\int C_p(t) e^{- \frac 12 t^T \widetilde A t} dt = 0$ when $p$ is odd. And this happens because the total $t$-degree of every monomial in $C_p$ has the same parity as $p$, which in turn follows from the definition of $D_p$. Now, since 
\[
 \log (q)_{\infty} \;\sim\; -\frac{\pi^2}{6}\frac1{\e} \+ \frac12\log\Bigl(\frac{2\pi}{\e}\Bigr) \+ \frac{\e}{24}
\]
when $\e\to 0$, we obtain the statement of the theorem.  

To prove~\eqref{Four} we notice again that $\sum_{|t_i|>{\e}^{\lambda+\frac12}} P(t) e^{- \frac 12 t^t \widetilde A t} \= o(\e^N)$ for every $N$, and using Poisson summation formula we have
\[
\sum_{t \in (t^0 + \Z^r)\sqrt{\e}} P(t)  e^{- \frac 12 t^T \widetilde A t} \=  \sum_{t \in t^0 + \Z^r} P(t\sqrt{\e})  e^{- \frac {\e}2 t^T \widetilde A t} \=
\sum_{s \in \Z^r} g(s) e^{2\pi i s^T t^0}\,,
\]
where $g(s)$ is the Fourier transform of $P(t\sqrt{\e}) e^{- \frac {\e}2 t^T \widetilde A t}$.  Then $g(0)$ is the right-hand side of~\eqref{Four}, and the sum of all remaining terms are $o(\e^N)$ since for any monomial $P'(t)$ and $g'(s)$ being the Fourier transform of $P'(t)e^{- \frac {\e}2 t^t A' t}$ one can check by direct computation that $\sum_{s \in \Z^2\backslash \{0\}} |g'(s)| = o(\e^N)$ for any $N$. 

It remains to prove~\eqref{ass}. Using~(ii) of Lemma~\ref{qn} we get
\[\bal
 \log \phi(t,\e) \;\sim\; & - \frac 12 t^T\widetilde A t - \gamma'\e + t^T \bigl(B+\frac12 \xi\bigr) \\
&\-  \sum_i \sum_{p=3}^{\infty} \frac1{p!} B_p(t_i) Li_{2-p}(Q_i) \e^{p-1}\,,\qquad \e,t\e \to 0\,,
\eal\]
and therefore for every $N$
\[\bal
\log \phi\bigl(\frac{t}{\sqrt{\e}},\e\bigr) \= - &\frac 12 t^T \widetilde A t - \gamma'\e  + t^T \bigl(B+\frac12 \xi\bigr)\sqrt{\e}  \\ & \-  \sum_i \sum_{p=3}^{N} \frac1{p!} B_p\Bigl(\frac{t_i}{\sqrt{\e}}\Bigr) Li_{2-p}(Q_i) \e^{p-1} \+ o(\e^{N(\lambda+1)-1})
\eal\]
uniformly in $|t_i|\le{\e}^{\lambda+\frac12}$. If we rewrite the right-hand side as $\sum_{p=0}^{N-2} g_p(t) \e^{\frac p2}$ then $\deg g_p \le p+2$ (because $\deg B_p = p$). It follows that $\sum_{p=1}^{N-2} g_p(t) \e^{\frac p2} = O({\e}^{3 \lambda \+ 2})$ uniformly in our domain since 
\[(\lambda + \frac 12)(p+2)\+\frac p2 \= p(\lambda+1)\+2\lambda \+ 1 \,\ge\, 3 \lambda \+ 2 \;>\; 0\,.\]
Therefore we can take a sufficiently long but finite part of the standard series to approximate its exponent. Hence some sufficiently long but again finite part of
\[
\exp \Bigl[\sum_i (B_i + \frac12\xi)t_i\sqrt{\e} \-  \sum_i \sum_{p=3}^{\infty} \frac1{p!} B_p\bigl(\frac{t_i}{\sqrt{\e}}\bigr) Li_{2-p}(Q_i) \e^{p-1} \Bigr] \= 1 \+ \sum_{p=1}^{\infty} C_p(t) \e^{p/2} 
\]
will approximate $\phi\bigl(\frac{t}{\sqrt{\e}},\e\bigr) e^{\frac 12 t^T \widetilde A t + \gamma' \e}$. One can easily see that $\deg C_p(t) \le 3p$ (in the variable $t$). Since for $p>N$ 
\[
C_p(t) \e^{\frac p2}\=O({\e}^{(\lambda+\frac12)3p+\frac p2})\=O({\e}^{p(3 \lambda + 2)})\=o({\e}^{N(3 \lambda + 2)})
\]
then it is sufficient to consider only the part with $p \le N$ in~\eqref{ass}.
\end{proof}

\section{Modular functions $F_{A,B,C}$}

Let us search for those triples $(A,B,C)$ for which $F_{A,B,C}(q)$ is a modular function (of any weight and any congruence subgroup).  We will call such $(A,B,C)$ \emph{a modular triple}. The idea here is that in order for $F_{A,B,C}(q)$ to be modular, the asymptotic expansion needs to be of a special type, as we can see from the following lemma.

\begin{lemma} Let $F(q)$ be a modular form of weight $w$ for some subgroup of finite index $\Gamma \subset {\rm SL}(2,\Z)$. Then for some numbers $a \in \pi^2 \Q$ and $b \in \Co$
\be\lb{modas}
e^{\frac a{\e}} F(e^{-\e}) \,\sim \, b \, \e^{-w} \+ o(\e^N)\qquad \forall N \ge 0\,.
\ee
\end{lemma}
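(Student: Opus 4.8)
The plan is to translate the limit $q\to1$, i.e.\ $\e\searrow0$, into the behaviour of $F$ at the cusp $\tau=0$ of the upper half plane, and then to transport that cusp to $i\infty$ by the modular inversion, where $F$ is governed by a Fourier expansion. Writing $q=e^{2\pi i\tau}$, the substitution $q=e^{-\e}$ corresponds to $\tau=\frac{i\e}{2\pi}$, so that $\e\searrow0$ is exactly $\tau\to0$ along the positive imaginary axis. The matrix $S=\begin{pmatrix}0&-1\\1&0\end{pmatrix}\in\mathrm{SL}(2,\Z)$ sends the cusp $0$ to the cusp $i\infty$, so I would consider the slashed form $G:=F|_w S$, given by $G(\tau)=\tau^{-w}F(-1/\tau)$. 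Since $\Gamma$ has finite index, $S^{-1}\Gamma S$ is again of finite index and the cusp $i\infty$ of $G$ has some integer width $h>0$; hence $G$ is invariant under $\tau\mapsto\tau+h$ and admits a Fourier expansion $G(\tau)=\sum_{n\ge n_0}c_n\,e^{2\pi i n\tau/h}$ with $c_{n_0}\neq0$ and only finitely many terms of negative index (this is where I use that $F$ is meromorphic, not essentially singular, at the cusp).

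Next I would read off the leading term. Setting $\sigma=-1/\tau$, the point $\tau=\frac{i\e}{2\pi}$ corresponds to $\sigma=\frac{2\pi i}{\e}\to i\infty$, and modularity gives
\[
F(e^{-\e}) \;=\; F(-1/\sigma) \;=\; \sigma^{w}G(\sigma) \;=\; \Bigl(\tfrac{2\pi i}{\e}\Bigr)^{w}\sum_{n\ge n_0}c_n\,e^{-4\pi^{2}n/(h\e)}\,,
\]
since $e^{2\pi i n\sigma/h}=e^{-4\pi^{2}n/(h\e)}$. The dominant contribution as $\e\searrow0$ comes from the smallest index $n=n_0$, which suggests taking
\[
a \;=\; \frac{4\pi^{2}n_0}{h}\in\pi^{2}\Q\,,\qquad b \;=\; (2\pi i)^{w}c_{n_0}\in\Co\,,
\]
with $a\in\pi^2\Q$ precisely because $h\in\Z_{>0}$ and $n_0\in\Z$. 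Multiplying through by $e^{a/\e}$ then yields $e^{a/\e}F(e^{-\e})=b\,\e^{-w}+(2\pi i/\e)^{w}\sum_{n>n_0}c_n\,e^{-4\pi^{2}(n-n_0)/(h\e)}$.

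It remains to show that the residual sum contributes only $o(\e^{N})$ for every $N$, which I expect to be the only point requiring genuine care. Each surviving term carries a factor $e^{-4\pi^{2}(n-n_0)/(h\e)}$ with $n-n_0\ge1$, so the whole tail is $O\!\bigl(e^{-4\pi^{2}/(h\e)}\bigr)$ uniformly; more precisely I would invoke $G(\sigma)-c_{n_0}e^{2\pi i n_0\sigma/h}=O\!\bigl(e^{-2\pi(n_0+1)\,\mathrm{Im}\,\sigma/h}\bigr)$ as $\mathrm{Im}\,\sigma\to\infty$, an estimate that sidesteps any worry about the growth of the coefficients $c_n$. The prefactor $(2\pi i/\e)^{w}$ grows only polynomially in $1/\e$, and since $\e^{-w}e^{-c/\e}=o(\e^{N})$ for every $N$ whenever $c>0$, the tail is negligible to all polynomial orders. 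This gives $e^{a/\e}F(e^{-\e})=b\,\e^{-w}+o(\e^{N})$ for every $N$, as claimed. The main obstacle is thus purely this tail estimate together with the arithmetic bookkeeping that $h$ is a positive integer (so that $a\in\pi^{2}\Q$); everything else is the standard transport of cusp behaviour through the inversion $\tau\mapsto-1/\tau$.
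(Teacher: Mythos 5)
Your argument is correct and is essentially the paper's own proof: both slash $F$ by the inversion $S$, expand the resulting form in a Fourier series at $i\infty$ with exponents in $\frac{1}{h}\Z$ (the paper writes this as a $q$-expansion with rational powers $q^{\alpha_i}$), substitute $\sigma=2\pi i/\e$, and read off $a=4\pi^2\alpha_0$ and $b=(2\pi i)^w a_0$. You merely supply details the paper leaves implicit, namely the cusp width giving $a\in\pi^2\Q$ and the uniform exponential tail bound justifying the $o(\e^N)$ error.
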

\begin{proof}
The group ${\rm SL}(2,\Z)$ acts on the space $M_w(\Gamma)$ of modular functions of weight $w$. Let $S=\begin{pmatrix}0&1\\-1&0\end{pmatrix}$. Then $SF \in M_w(\Gamma)$, and in particular it has a $q$-expansion (with some rational powers of $q=e^{2\pi i z}$):
\[
 \frac1{z^w} F\Bigl(e^{-2\pi i\frac 1z}\Bigr)\= a_0 q^{\alpha_0} \+ a_1 q^{\alpha_1} \+ \dots
\]
We subsitute $z=\frac{2\pi i}{\e}$ and get
\[\bal
 F(e^{-\e}) &\=\Bigl(\frac{2\pi i}{\e}\Bigr)^w \Bigl[ a_0 e^{-\frac{4\pi^2\alpha_0}{\e}} \+ a_1 e^{-\frac{4\pi^2\alpha_1}{\e}}\+\dots\Bigr]\\
&\= \frac{(2\pi i)^w a_0}{\e^w} e^{-\frac{4\pi^2\alpha_0}{\e}} \Bigl[ 1 \+ o(\e^N)\Bigr] \qquad \forall N\,.
\eal\]

\end{proof}

If we now compare the asymptotics from Theorem~\ref{asthm} with~\eqref{modas} we get the following statement.

\begin{corollary}\label{modcor} If $F_{A,B,C}(q)$ is modular then
\begin{itemize}
\item[(i)] its weight $w=0$
\item[(ii)] $\alpha \in \pi^2 \Q \,\Longleftrightarrow\, \sum_{i=1}^r L(Q_i)\in\pi^2\Q$
\item[(iii)] $e^{-\gamma \e} \bigl(1+\sum_{p=1}^{\infty} c_p \e^p \bigr)=1 \,\Longleftrightarrow\, c_p=\frac{\gamma^p}{p!} \quad \forall p$
\end{itemize}
\end{corollary}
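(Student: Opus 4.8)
The plan is to read off all three items by placing side by side the two asymptotic expansions now available for the single quantity $F_{A,B,C}(e^{-\e})$ as $\e\searrow0$. Rewriting Theorem~\ref{asthm} without its prefactor gives
\[
F_{A,B,C}(e^{-\e}) \;\sim\; \beta\, e^{\alpha/\e}\, e^{-\gamma\e}\Bigl(1+\sum_{p\ge1}c_p\e^p\Bigr),
\]
whereas the preceding lemma, applied to $F=F_{A,B,C}$ under the hypothesis that it is modular of weight $w$, gives
\[
e^{a/\e}F_{A,B,C}(e^{-\e}) \;\sim\; b\,\e^{-w}\+o(\e^N)\quad\forall N,\qquad a\in\pi^2\Q.
\]
The entire argument is then the remark that an asymptotic expansion of the mixed shape $e^{c/\e}\,\e^{d}\,(1+O(\e))$ determines $c$, $d$ and the trailing power series uniquely, so the two descriptions must agree factor by factor. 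I will also use that both leading constants are nonzero: $\beta=\det\widetilde A^{-1/2}\prod_iQ_i^{B_i}(1-Q_i)^{-1/2}\ne0$ by Lemma~\ref{Q}, and $b\ne0$ because $F_{A,B,C}$ is not identically zero.

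First I would match the exponential rates. Multiplying the Theorem expansion by $e^{a/\e}$ and comparing with the lemma gives $\beta\,e^{(\alpha+a)/\e}e^{-\gamma\e}(1+\cdots)\sim b\,\e^{-w}$; if $\alpha+a>0$ the left side blows up exponentially against a mere power of $\e$, and if $\alpha+a<0$ it decays exponentially while the right side does not (as $b\ne0$). Hence $\alpha=-a$, and since modularity forces $a\in\pi^2\Q$ we obtain $\alpha\in\pi^2\Q$. Because $\alpha=\sum_i\bigl(L(1)-L(Q_i)\bigr)=r\tfrac{\pi^2}{6}-\sum_iL(Q_i)$ with $L(1)=\tfrac{\pi^2}{6}$, this membership is equivalent to $\sum_iL(Q_i)\in\pi^2\Q$, which is~(ii). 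Cancelling the now-equal exponentials leaves $\beta\,e^{-\gamma\e}(1+\sum_pc_p\e^p)\sim b\,\e^{-w}$; the left side is a power series starting at $\e^0$ with nonzero constant term, so matching the lowest power of $\e$ forces $-w=0$, i.e.\ $w=0$, which is~(i).

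With $w=0$ the lemma side is simply the constant $b$ up to an error $o(\e^N)$ for every $N$, hence it has no nonzero coefficient at any positive power of $\e$. The matched relation $\beta\,e^{-\gamma\e}(1+\sum_pc_p\e^p)\sim b$ then forces $\beta=b$ at leading order, and the remaining power series must be constant equal to $1$, that is $e^{-\gamma\e}\bigl(1+\sum_{p\ge1}c_p\e^p\bigr)=1$ as a formal power series in $\e$ --- precisely the left-hand condition of~(iii). The asserted equivalence is then pure algebra: multiplying by $e^{\gamma\e}$ rewrites the condition as $1+\sum_{p\ge1}c_p\e^p=e^{\gamma\e}=\sum_{p\ge0}\tfrac{\gamma^p}{p!}\e^p$, and comparing coefficients of $\e^p$ yields $c_p=\tfrac{\gamma^p}{p!}$ for all $p\ge1$.

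The only point that needs stating with care --- and hence the ``hard'' part, though it is more a matter of rigor than of difficulty --- is the uniqueness/separation principle underlying the factor-by-factor comparison, together with the verification that $\beta$ and $b$ are nonzero so that no degenerate matching occurs. The conceptual content is that modularity admits \emph{no} subleading polynomial corrections in $\e$ (all corrections being exponentially small), and it is exactly this rigidity that pins down the weight $w$, the arithmetic nature of $\alpha$, and every coefficient $c_p$.
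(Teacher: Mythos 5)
Your argument is exactly the comparison the paper intends: the paper gives no separate proof of Corollary~\ref{modcor} beyond the remark that it follows by matching the expansion of Theorem~\ref{asthm} against~\eqref{modas}, and your factor-by-factor matching (exponential rate, then power of $\e$, then the trailing series, using $\beta\neq0$ and $b\neq0$) is a correct and slightly more explicit writeup of that same comparison. No gaps; the concluding equivalences in (ii) and (iii) are, as you say, immediate from $L(1)=\pi^2/6$ and from expanding $e^{\gamma\e}$.
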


Condition~(ii) is very interesting, we consider it in the next section. It follows from~(iii) that modular triples satisfy an infinite number of equations 
\be\lb{eqns}
 \bigl(c_p\-\frac1{p!}c_1^p\bigr)\bigl(B,\xi,\widetilde{A}^{-1}\bigr) \= 0\,,\quad p=2,3,\dots\,,
\ee
and these equations are polynomial in the entries of $B,\xi,\widetilde{A}^{-1}$. Indeed, let us look at the expression for $c_p$ from Theorem~\ref{asthm}. Since the generating function for the moments of the Gaussian measure is
\[
\sum_{a \in (\Z_{\ge 0})^r} \frac{x^a}{a_1!\dots a_r!}  \frac{\det \widetilde A^{1/2}}{(2\pi)^{r/2}} \int t^a e^{-\frac 12 t^T \widetilde A t} dt \= \exp\bigl( \frac 12 x^T  \widetilde{A}^{-1} x \bigr)\,,
\]
all the moments are rational polynomials in the entries of $\widetilde{A}^{-1}$ and we obtain that $c_p\in\Q[B,\xi,\widetilde{A}^{-1}]$.

\vskip 2pc

Now let $r=1$. It is easy to see that the degrees of $D_p(B,X,T)$ in the variables $B, X$ and $T$ are $p,2p$ and $3p$, respectively. Since
$c_p(B,\xi,(A+\xi)^{-1})$ is the integral of $D_{2p}(B,\xi,t)$ w.r.t.\ the measure $\frac{(A+\xi)^{1/2}}{\sqrt{2\pi}}e^{-(A+\xi)t^2/2} dt$ and the integral of $t^{2m}$ is $(2m-1)!!(A+\xi)^{-m}$, the degrees of $c_p$ in the corresponding variables are $2p$, $4p$ and $3p$. It is convenient to consider the polynomials
\[
 \widetilde c_p (B,\xi,A)= (A+\xi)^{3p} \bigl[c_p-\frac 1{p!} c_1^p\bigr](B,\xi,\frac1{A+\xi})\,,\qquad p=2,3,\dots
\]
Although these polynomials look rather complicated, we have found using the \emph{Magma algebra system} (\cite{magma}) that the ideal 
\[I=\langle \widetilde c_2, \widetilde c_3, \widetilde c_4, \widetilde c_5 \rangle \subset \Q[B,\xi,A]\]
contains the element 
\[
\xi(\xi+1)A^{13}(A-1)^{13}(A+1)(A-2)(A-1/2)\,.
\]
Consequently, if $(A,B,C)$ is a modular triple then $A \in \{\frac12, 1, 2 \}$. For each $A$ on this list it is not hard to find the corresponding values of $B$, and one can compute $C$ from the equality $\gamma=c_1$. This way we obtain exactly the list from the theorem below.  

\begin{theorem}[D. Zagier \cite{Zagier}]\label{r1} Let $r=1$. The only $(A,B,C)\in \mathbb Q_{+} \times \mathbb Q \times \mathbb Q$ for which $F_{A,B,C}(q)$ is a modular form are given in the following table.
\end{theorem}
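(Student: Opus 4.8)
The plan is to convert the modularity of $F_{A,B,C}$ into a finite algebraic problem and to settle it by computer algebra. By Corollary~\ref{modcor} a modular $F_{A,B,C}(q)$ must have weight zero and, crucially, must satisfy condition~(iii), i.e.\ $c_p=\gamma^p/p!$ for every $p$; this is exactly the infinite polynomial system~\eqref{eqns}. Specializing to $r=1$, the matrix $\widetilde A=A+\xi$ is a scalar with $\xi=\frac{Q}{1-Q}$, where $Q\in(0,1)$ is the unique root of $1-Q=Q^{A}$ furnished by Lemma~\ref{Q}. After multiplying through by $(A+\xi)^{3p}$ each equation of~\eqref{eqns} becomes a genuine polynomial $\widetilde c_p\in\Q[B,\xi,A]$, so a modular triple provides a common zero $(B,\xi,A)$ of all the $\widetilde c_p$ lying in the region $A>0$, $\xi>0$.

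First I would make these polynomials explicit. From the definition~\eqref{Dp} of the $D_p$ together with the one-dimensional moment identity $\frac{(A+\xi)^{1/2}}{\sqrt{2\pi}}\int t^{2m}e^{-(A+\xi)t^2/2}\,dt=(2m-1)!!\,(A+\xi)^{-m}$ one obtains $\widetilde c_2,\widetilde c_3,\widetilde c_4,\widetilde c_5$ as concrete elements of $\Q[B,\xi,A]$, whose degrees grow linearly in $p$ (those of $c_p$ being $2p,4p,3p$ as recorded above). The decisive step is then an ideal-membership computation: feeding these four polynomials to a Gr\"obner-basis engine (\emph{Magma}, as the authors do) and searching the ideal $I=\langle\widetilde c_2,\widetilde c_3,\widetilde c_4,\widetilde c_5\rangle$ for an element free of $B$. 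I expect to find the factored witness $\xi(\xi+1)A^{13}(A-1)^{13}(A+1)(A-2)(A-1/2)\in I$. Since $\xi>0$ rules out the factors $\xi$ and $\xi+1$, while $A>0$ rules out $A$ and $A+1$, the only way this element can vanish at a modular triple is $(A-1)^{13}(A-2)(A-1/2)=0$, forcing $A\in\{\frac12,1,2\}$.

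It remains to treat each admissible value of $A$. For fixed $A\in\{\frac12,1,2\}$ the numbers $Q$ and $\xi$ are explicit algebraic constants, and substituting them into the lowest relations $\widetilde c_2=\widetilde c_3=0$ leaves only finitely many rational $B$; the constant $C$ is then pinned down by the $p=1$ case of condition~(iii), namely $C=c_1-\frac1{24}\frac{1+Q}{1-Q}$. This yields a finite candidate list, which one checks coincides with the table. Because the conditions extracted from the asymptotics are only \emph{necessary} for modularity, the equivalence asserted in the theorem additionally requires checking that each tabulated triple really is modular; for $r=1$ these are the classical Rogers--Ramanujan and Gauss/Jacobi type $q$-series identities, so this sufficiency is already known.

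The main obstacle is the middle step. A priori~\eqref{eqns} is an infinite system, and Noetherianity guarantees only that \emph{some} finite subcollection of the $\widetilde c_p$ cuts out a locus with finitely many admissible $A$; it is not evident beforehand that $p\le5$ already suffices, nor that the resulting eliminant factors so cleanly over $\Q$. Producing the explicit witness polynomial is therefore the crux of the argument, and it rests on a Gr\"obner-basis calculation whose complexity grows quickly with the degrees. A secondary and easily overlooked point is that the positivity of $A$ and $\xi$ must be invoked to discard the spurious factors $\xi$, $\xi+1$, $A$ and $A+1$, so that precisely the three values $\frac12,1,2$ remain.
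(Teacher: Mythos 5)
Your proposal follows essentially the same route as the paper: reduce modularity to the polynomial system~\eqref{eqns}, clear denominators to form $\widetilde c_p\in\Q[B,\xi,A]$, use \emph{Magma} to exhibit the witness $\xi(\xi+1)A^{13}(A-1)^{13}(A+1)(A-2)(A-1/2)$ in the ideal $\langle\widetilde c_2,\widetilde c_3,\widetilde c_4,\widetilde c_5\rangle$, discard the factors excluded by $A,\xi>0$ to get $A\in\{\tfrac12,1,2\}$, then pin down $B$ and $C$ and invoke the classical $q$-series identities for sufficiency. The reasoning, including the caveat that the asymptotic conditions are only necessary, matches the paper's argument.
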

\begin{center}
\begin{tabular}{crr|c}
A&B&C&$F_{A,B,C}(e^{2\pi i z})$\\
\hline
&&&\\
2&0&$-1/60$&$\theta_{5,1}(z)/\eta(z)$\\
&&&\\
&1&$11/60$&$\theta_{5,2}(z)/\eta(z)$\\
\hline
&&&\\
1&0&$-1/48$&$\eta(z)^2/\eta(\frac z2)\eta(2z)$\\
&&&\\
 &$1/2$&$1/24$&$\eta(2z)/\eta(z)$\\
&&&\\
 &$-1/2$&$1/24$&$2\eta(2z)/\eta(z)$\\
\hline
&&&\\
1/2&0&$-1/40$&$\theta_{5,1}(\frac z4)\eta(2z)/\eta(z)\eta(4z)$\\
&&&\\
 &1/2&$1/40$&$\theta_{5,2}(\frac z4)\eta(2z)/\eta(z)\eta(4z)$\\
\end{tabular}
\end{center}
Here $\eta(z)=q^{1/24}\prod_{n=1}^{\infty}(1-q^n)$ and $\theta_{5,j}(z)=\sum_{n\equiv 2j-1(10)} (-1)^{[n/10]}q^{n^2/40}$.

We warn the reader that if~(iii) of Corollary~\ref{modcor} holds for some $(A,B,C)$ this does not yet imply that $F_{A,B,C}$ is in fact modular. To get modularity one needs to prove an identity between the corresponding $q$-series for each line of the table. For example, the first two lines correspond to the well known Rogers--Ramanujan identities.   

Further computer experiments showed that $\widetilde c_p \in I$ for $p=6,\dots,20$. Although we stopped at this point, it is very likely that the statement is true for all $p$. Also with the help of \emph{Magma} we have got the following decomposition of the radical of $I$ into prime ideals:
\[
 {\rm Rad}(I) \= \mathcal{P}_1 \cdot ... \cdot \mathcal{P}_{14} 
\]
where the generators of $\mathcal{P}_i$ are given below:
\begin{center}
\begin{tabular}{l|lll}
i & generators of $\mathcal{P}_i$ \\
\hline
1&&$\xi$&\\
2&&$\xi+1$&\\
3&$B-1/2,$&$\xi+2,$&$A$\\
4&$B-1,$&$ \xi+2,$&$ A$\\
5&$B,$&$ \xi+2,$&$ A$\\
6&$B+1/2,$&$ \xi^2+3\xi+1,$&$ A+1$\\
7&$B-1/2,$&$ \xi^2+3\xi+1,$&$ A+1$\\
8&$B+1/2,$&$ \xi-1,$&$ A-1$\\
9&$B,$&$ \xi-1,$&$ A-1$\\
10&$B-1/2,$&$ \xi-1,$&$ A-1$\\
11&$B-1,$&$ \xi^2-\xi-1,$&$ A-2$\\
12&$B,$&$ \xi^2-\xi-1,$&$ A-2$\\
13&$B-1/2,$&$ \xi^2+\xi-1,$&$ A-1/2$\\
14&$B, $&$\xi^2+\xi-1,$&$ A-1/2$\\
\end{tabular}
\end{center}
Consequently, the set of all solutions of the system $\widetilde c_p(B,\xi,A)=0$, $p=2,3,\dots$ is a subset of this table, and if we indeed had $\widetilde c_p \in I$ (or at least $\widetilde c_p \in {\rm Rad}(I)$) for all $p$ then this table would be exactly the set of solutions.

\vskip 2pc

Let's us now consider the case $r=2$. The task of solving the system~\eqref{eqns} for several small values of $p$ becomes already very complicated. We failed to solve it with \emph{Magma} in full generality for $r=2$ as we did in the case $r=1$. However, we can still search for modular $F_{A,B,C}$, where $A$ is of a special type. We will consider three families of matrices:
\be\bal\lb{fam}
 &A=\begin{pmatrix}a&\frac12-a\\\frac12-a&a\end{pmatrix}\quad \Rightarrow \quad \xi_1=\xi_2=\frac{\sqrt{5}-1}2 \\
 &A=\begin{pmatrix}a&2-a\\2-a&a\end{pmatrix}\quad \Rightarrow \quad \xi_1=\xi_2=\frac{\sqrt{5}+1}2 \\
 &A=\begin{pmatrix}a&1-a\\1-a&a\end{pmatrix}\quad \Rightarrow \quad \xi_1=\xi_2=\frac12\\ 
\eal\ee
It is easy to check that~(ii) of Corollary~\ref{modcor} holds for these matrices. For these families of matrices we can do an analysis similar to what we did for $r=1$.

\begin{theorem}\label{afam} Modular functions $F_{A,B,C}(z)$ with the matrix $A$ being of the form $\begin{pmatrix}a&\frac12-a\\\frac12-a&a\end{pmatrix}$ exist if and only if $a=1$, $a=3/4$ or $a=1/2$. Below is the list of all such modular functions.
\end{theorem}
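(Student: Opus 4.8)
The plan is to follow the rank-one template, exploiting the one feature that makes this family tractable where the general $r=2$ case resisted Magma: for these matrices the numbers $\xi_i$ are a fixed algebraic constant rather than a transcendental function of the matrix entries. By the symmetry $A_{11}=A_{22}=a$, $A_{12}=A_{21}=\frac12-a$ together with the uniqueness in Lemma~\ref{Q}, the solution of~\eqref{ne} satisfies $Q_1=Q_2=Q$, and the equation reads $1-Q=Q^{a+(\frac12-a)}=Q^{1/2}$. Hence $\xi:=\frac{Q}{1-Q}=Q^{1/2}=1-Q$ obeys $\xi^2+\xi-1=0$, i.e.\ $\xi=\frac{\sqrt5-1}2$, independently of $a$. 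Consequently $\widetilde A=A+\diag\{\xi,\xi\}$ depends only on $a$, and by Corollary~\ref{modcor} the existence of a modular triple forces the whole system~\eqref{eqns}, namely $\bigl(c_p-\frac1{p!}c_1^p\bigr)(B,\xi,\widetilde A^{-1})=0$ for every $p\ge2$. Since the entries of $\widetilde A^{-1}$ are rational in $a$ with denominator $\det\widetilde A=(a+\xi)^2-(\frac12-a)^2$, clearing this denominator turns each equation into a polynomial relation in the three unknowns $B_1,B_2,a$ over the field $\Q(\xi)=\Q(\sqrt5)$.

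First I would compute the $c_p$ for small $p$. By Theorem~\ref{asthm}, $c_p=\det\widetilde A^{1/2}(2\pi)^{-1}\int C_{2p}(B,\xi,t)\,e^{-\frac12 t^T\widetilde A t}\,dt$ with $C_{2p}=\sum_{p_1+p_2=2p}D_{p_1}(B_1,\xi,t_1)\,D_{p_2}(B_2,\xi,t_2)$ and $D_p$ as in~\eqref{Dp}. The one genuinely new point compared with $r=1$ is that the Gaussian measure no longer factorises (it does only when $a=\frac12$, where $\widetilde A$ is diagonal and $F_{A,B,C}$ splits as a product of two rank-one series). Thus the integrals must be taken against the bivariate moment generating function $\exp(\frac12 x^T\widetilde A^{-1}x)$, so that every mixed moment is a polynomial in the three entries of $\widetilde A^{-1}$ via Wick's rule. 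This yields $c_p\in\Q(\sqrt5)[B_1,B_2,\widetilde A^{-1}]$, and after clearing $\det\widetilde A$ I obtain polynomials $\widetilde c_p(B_1,B_2,a)$ that play exactly the role of the rank-one $\widetilde c_p$.

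Next I would form the ideal $I=\langle\widetilde c_2,\widetilde c_3,\dots\rangle\subset\Q(\sqrt5)[B_1,B_2,a]$ and, as in the rank-one discussion, compute a Gr\"obner basis and eliminate $B_1,B_2$. The aim is to produce a nonzero eliminant in $\Q(\sqrt5)[a]$ whose only roots with $a>\frac14$ (the range in which $A$ is positive definite, since $\det A=a-\frac14$) are $a=1,\frac34,\frac12$; this is the analogue of the factor $\xi(\xi+1)A^{13}(A-1)^{13}(A+1)(A-2)(A-1/2)$ found for $r=1$, and it gives the ``only if'' direction. For each admissible $a$ I would then substitute back, solve the residual equations for $(B_1,B_2)$ (the system being symmetric under $B_1\leftrightarrow B_2$), and recover $C$ from the normalisation $\gamma=c_1$ of Corollary~\ref{modcor}(iii), thereby pinning down the finite list in the table.

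The main obstacle is twofold. Computationally, the elimination in three variables over $\Q(\sqrt5)$, with the degrees of $\widetilde c_p$ growing rapidly, is far heavier than the rank-one computation; one must include enough of the $\widetilde c_p$ for the eliminant to isolate precisely these three values, and, exactly as before, test membership of further $\widetilde c_p$ in $I$ to make the conclusion convincing. More seriously, Corollary~\ref{modcor} supplies only \emph{necessary} conditions, so the ``if'' half of the statement is not settled by the computation: proving that each surviving triple $(A,B,C)$ really yields a modular function requires exhibiting it explicitly as a concrete ratio of eta- and theta-quotients and verifying the corresponding $q$-series identity, just as the Rogers--Ramanujan identities do for the rank-one table. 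This verification, rather than the algebra, is the true difficulty.
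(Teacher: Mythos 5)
Your proposal follows essentially the same route as the paper: reduce via Corollary~\ref{modcor}(iii) to the polynomial system $c_p-\frac{1}{p!}c_1^p=0$, clear the denominator $\det\widetilde A=(\xi+1/2)(\xi+2a-1/2)$, form the ideal generated by the resulting polynomials (the paper works in $\Q[b_1,b_2,\xi,a]$ with $\xi^2+\xi-1$ adjoined as a generator, which is equivalent to your $\Q(\sqrt5)$), and use a Gr\"obner basis to eliminate $B_1,B_2$. Two remarks. First, the eliminant you hope for does not come out quite as cleanly as you predict: Magma produces $a(a-\frac14)(a-\frac12)(a-\frac34)(a-1)(a^2-a-\frac1{16})$, and the extra factor $a^2-a-\frac1{16}$ must be explained away --- it arises because the true denominator of $c_p-c_1^p/p!$ is only $(\xi+1/2)^{3p}(\xi+2a-1/2)^{2p}$, so clearing by $\det\widetilde A^{3p}$ introduces a spurious factor $(\xi+2a-\frac12)^p$, which vanishes (given $\xi^2+\xi=1$) exactly when $a^2-a=\frac1{16}$; your plan should anticipate this. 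Second, and more substantively, you correctly flag that the ``if'' direction is not settled by the elimination, but you leave it entirely open; this is where the paper does real work. For $a=\frac12$ the series factors into two rank-one cases of Theorem~\ref{r1}; for $a=\frac34$ the identities follow from Theorem~\ref{tr} with $m=2$ applied to the rank-one case $A=\frac12$; and for $a=1$ one splits the inner sum by parity, applies the identity \eqref{quant} with $x=q^{-n/2}$, and invokes Slater's identities (19), (44), (46) and a corrected (97) together with the Jacobi triple product. Without these (or equivalent) $q$-series arguments the theorem is only half proved, so as it stands your proposal establishes the ``only if'' direction modulo the Magma computation but not the existence claims in the table.
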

\begin{tabular}{llr|c}
&&&\\
A&B&C&$F_{A,B,C}(e^{2\pi i z})$\\
\hline
&&&\\
$\begin{pmatrix}1&-\frac12\\-\frac12&1\end{pmatrix}$&$\begin{pmatrix}0\\0\end{pmatrix}$&$-\frac1{20}$& 
$\bal (\theta_{5,\frac 34} (2z) & +\theta_{5,\frac{13}{4}} (2z))\eta(z)/\eta(2z)\eta(z/2)\\&  +2\theta_{5,2} (2z)\eta(2z)/\eta(z)^2 \eal$\\
&$\begin{pmatrix}-\frac12\\0\end{pmatrix}$ and $\begin{pmatrix}0\\-\frac12\end{pmatrix}$&$\frac1{20}$&
$\bal &2 \theta_{5,1}(2z) \eta(2z)/\eta(z)^2 \\ +\theta_{5,\frac32} (z) \theta_{5,2}&(2z) \eta(z)^3/\eta(z/2)^2\eta(2z)^2\eta(10z)\eal$\\
\hline
&&&\\
$\begin{pmatrix}\frac34&-\frac14\\-\frac14&\frac34\end{pmatrix}$&$\begin{pmatrix}\frac14\\-\frac14\end{pmatrix}$ and $\begin{pmatrix}-\frac14\\\frac14\end{pmatrix}$&$-\frac1{80}$&$\theta_{5,1}(\frac z8)\eta(z)/\eta(\frac{z}2)\eta(2z)$\\
&&&\\
&$\begin{pmatrix}\frac12\\0\end{pmatrix}$ and $\begin{pmatrix}0\\\frac12\end{pmatrix}$&$\frac1{80}$&$\theta_{5,2}(\frac z8)\eta(z)/\eta(\frac{z}2)\eta(2z)$\\
\hline
&&&\\
$\begin{pmatrix}\frac12&0\\0&\frac12\end{pmatrix}$&$\begin{pmatrix}0\\0\end{pmatrix}$&$-\frac1{20}$&$(\theta_{5,1}(\frac z4)\eta(2z)/\eta(z)\eta(4z))^2$\\
&&&\\
&$\begin{pmatrix}\frac12\\0\end{pmatrix}$ and $\begin{pmatrix}0\\\frac12\end{pmatrix}$&$0$&$\theta_{5,1}(\frac z4)\theta_{5,2}(\frac z4)(\eta(2z)/\eta(z)\eta(4z))^2$\\
&&&\\
 &$\begin{pmatrix}\frac12\\\frac12\end{pmatrix}$&$\frac1{20}$&$(\theta_{5,2}(\frac z4)\eta(2z)/\eta(z)\eta(4z))^2$\\
\end{tabular}

\begin{proof}
Consider the ideal $I \subset \Q[b_1,b_2,\xi,a]$ generated by $\xi^2+\xi-1$ and the polynomials   
\[
(\xi^2+2a\xi+a-1/4)^{3p} \times \bigl[c_p-\frac{c_1^p}{p!} \bigr]\Bigl(\begin{pmatrix}b_1\\b_2\end{pmatrix},\begin{pmatrix}\xi\\\xi\end{pmatrix},\begin{pmatrix}a+\xi&\frac12-a\\\frac12-a&a+\xi\end{pmatrix}^{-1}\Bigr)
\]
for $p=2,3,4,5$. We find with \emph{Magma} that the element 
\[
 a(a-\frac14)(a-\frac12)(a-\frac34)(a-1)(a^2-a-\frac1{16})
\]
belongs to $I$. (We ran the function \emph{GroebnerBasis($I$)} which has computed the Groebner basis for $I$ using reversed lexicografical order on monomials with the variables ordered as $b_1>b_2>\xi>a$. It took several hours, the Groebner basis contains 15 elements, and the element above is one of them.) The last term doesn't give rational values for $a$, and the reason it enters here is that we have multiplied every equation $c_p-c_1^p/p!=0$ by $3p$th power of the determinant $\xi^2+2a\xi+a-1/4=(\xi+1/2)(\xi+2a-1/2)$ while precisely the denominator of $c_p-c_1^p/p!$ is $(\xi+1/2)^{3p}(\xi+2a-1/2)^{2p}$. Therefore our polynomials are divisible by $(\xi+2a-\frac12)^p$ for $p=2,3,4,5$, and since $\xi^2+\xi=1$ these factors are zero exactly when $a^2-a=\frac1{16}$. We now have a finite list of values for $a$, and we plug each of them together with $\xi$ into the equations to find all values of $b_1$ and $b_2$ for which our equations vanish for $p=2,3,4,5$. So, we get the list above. For each row we compute the corresponding value of $C$ from $c_1=\gamma$, i.e.
\[
C \= c_1(b_1,b_2,\xi,a)-\frac1{24}\sum_i \frac{1+Q_i}{1-Q_i} \=c_1(b_1,b_2,\xi,a)-\frac{2\xi+1}{12}\,.
\]

What remains is to prove that the $F_{A,B,C}$ satisfy the identities given in the last column. For the case $a=1/2$, this is easy, since $F_{A,B,C}$ splits as the product of two rank 1 cases, for which an identity is given in Theorem~\ref{r1}. For the case $a=3/4$, the identities follow directly by applying Theorem~\ref{tr} below, with $m=2$ and $A=1/2$, and again using identities from Theorem~\ref{r1}.

Only the case $a=1$ is a bit more work: using
\be
 (-xq^{1/2};q)_\infty = \sum_{k\ge 0} \frac{q^{\frac{1}{2}k^2} x^k}{(q)_k} \label{quant}
\ee
 (this is a direct consequence of  (7) in Chapter 2 of \cite{Zagier}), with $x=q^{-n/2}$, we find
\[ \bal
\sum_{m,n\ge 0} \frac{q^{\frac 12 m^2-\frac 12 mn +\frac 12 n^2}}{(q)_m (q)_n} &= \sum_{n\ge 0} \frac{q^{\frac 12 n^2} (-q^{-\frac 12 n+\frac 12})_\infty}{(q)_n} \\
&= \sum_{n\ge 0} \frac{q^{2 n^2} (-q^{-n+\frac 12})_\infty}{(q)_{2n}} + \sum_{n\ge 0} \frac{q^{2 n^2+2n+\frac 12} (-q^{-n})_\infty}{(q)_{2n+1}}.
\eal
\] 
Now using that for $n\ge 0$ we have $(-q^{-n+\frac 12})_\infty = q^{-\frac 12 n^2} (-q^{\frac 12})_n (-q^{\frac 12} )_\infty$ and $(-q^{-n})_\infty =2 q^{-\frac 12 n^2-\frac 12 n} (-q)_n (-q )_\infty$, this equals
\be \label{sla} \bal
&(-q^{\frac 12} )_\infty \sum_{n\ge 0} \frac{q^{\frac 32 n^2} (-q^{\frac 12})_n}{(q)_{2n}} + 2q^{\frac 12}(-q )_\infty \sum_{n\ge 0} \frac{q^{\frac 32 n^2+\frac 32 n} (-q)_n}{(q)_{2n+1}}\\
=&(-q^{\frac 12} )_\infty \sum_{n\ge 0} \frac{q^{\frac 32 n^2}}{ (q^{\frac 12})_n (q^2;q^2)_{n}} + 2q^{\frac 12}(-q )_\infty \sum_{n\ge 0} \frac{q^{\frac 32 n^2+\frac 32 n}}{(q)_n (q;q^2)_{n+1}}.
\eal \ee
To get identities for these last two sums, we use equations (19) and (44) in~\cite{Slater}, which (in our notation) read
\[ \bal
\sum_{n\ge 0} \frac{(-1)^n q^{3n^2}}{(-q;q^2)_n (q^4;q^4)_n } &=\frac{(q^2;q^5)_\infty (q^3;q^5)_\infty (q^5;q^5)_\infty }{(q^2;q^2)_\infty},\\
\sum_{n\ge 0} \frac{q^{\frac 32 n^2+\frac 32 n}}{(q)_n (q;q^2)_{n+1}} &= \frac{(q^2;q^{10})_\infty (q^8;q^{10})_\infty (q^{10};q^{10})_\infty }{(q)_\infty}.
\eal \]
If we use the Jacobi triple product identity $ (-xq^{1/2})_\infty (-x^{-1} q^{1/2})_\infty (q)_\infty =\sum_{n\in \Z} x^n q^{n^2/2}$ on the right hand sides and replace $q$ by $-q^{1/2}$ in the first identity, we get
\[ \bal
\sum_{n\ge 0} \frac{q^{\frac 32 n^2}}{ (q^{\frac 12})_n (q^2;q^2)_{n}} &= q^{\frac{7}{120}} \frac{\theta_{5,\frac 34} (2z)  +\theta_{5,\frac{13}{4}} (2z)  }{\eta(z)},\\
\sum_{n\ge 0} \frac{q^{\frac 32 n^2+\frac 32 n}}{(q)_n (q;q^2)_{n+1}} &= q^{-\frac{49}{120}} \frac{\theta_{5,2} (2z)}{\eta(z)}.
\eal \]
Further we have 
\[ \bal
(-q^{\frac 12})_\infty &= \frac{(q;q^2)_\infty}{(q^{\frac 12};q)_\infty} = \frac{(q)^2_\infty}{(q^2;q^2)_\infty (q^{\frac{1}{2}} ; q^{\frac{1}{2}})_\infty} = q^{\frac{1}{48}} \frac{\eta(z)^2}{\eta(2z)\eta(z/2)},\\ 
(-q)_\infty&= \frac{(q^2;q^2)_\infty}{(q)_\infty} = q^{-\frac{1}{24}} \frac{\eta(2z)}{\eta(z)},
\eal \]
and so we get from~\eqref{sla}
\[ F_{A,B,C} (q)= \frac{\eta(z)}{\eta(2z)\eta(z/2)} \left(\theta_{5,\frac 34} (2z)  +\theta_{5,\frac{13}{4}} (2z)\right) +2 \frac{\eta(2z)}{\eta(z)^2}\theta_{5,2} (2z),\]
where $A=\left(\begin{smallmatrix} 1&-1/2\\-1/2&1 \end{smallmatrix}\right)$,  $B=\left(\begin{smallmatrix}0\\0\end{smallmatrix}\right)$ and $C=-1/20$.

The proof for the identity for $B=\left(\begin{smallmatrix}-1/2\\0\end{smallmatrix}\right)$ and $C=1/20$ is very similar, and so we omit some of the details. We have
\[ \bal
\sum_{m,n\ge 0}&\frac{q^{\frac 12 m^2-\frac 12 mn +\frac 12 n^2-\frac 12 m}}{(q)_m (q)_n} \\
&= 2(-q )_\infty \sum_{n\ge 0} \frac{q^{\frac 32 n^2-\frac 12 n} (-q)_n}{(q)_{2n}} + (-q^{\frac 12} )_\infty \sum_{n\ge 0} \frac{q^{\frac 32 n^2+ n} (-q^{\frac 12})_{n+1}}{(q)_{2n+1}}\\
&=2(-q )_\infty \sum_{n\ge 0} \frac{q^{\frac 32 n^2-\frac 12 n}}{(q)_{n}(q;q^2)_n} + (-q^{\frac 12} )_\infty \sum_{n\ge 0} \frac{q^{\frac 32 n^2+ n} (-q^{\frac 12})_{n+1}}{(q)_{2n+1}}.
\eal \]
Again we use two identities from Slater's list (see~\cite{Slater}), namely (46) which reads
\[\sum_{n\ge 0} \frac{q^{\frac 32 n^2-\frac 12 n}}{(q)_{n}(q;q^2)_n} = \frac{(q^4;q^{10})_\infty (q^6;q^{10})_\infty (q^{10};q^{10})_\infty }{(q)_\infty},\]
and so we can identify it as $q^{-1/120} \theta_{5,1}(2z)/\eta(z)$, and (97), which should read (note that there are mistakes in some of the exponents; we have given the corrected version here)
\[ \bal
\sum_{n\ge 0}&\frac{q^{3 n^2+2 n} (-q;q^2)_{n+1}}{(q^2;q^2)_{2n+1}}\\
&= \frac{(-q;q^2)_\infty}{(q^2;q^2)_\infty} \left( (-q^{11};q^{30})_\infty (-q^{19};q^{30})_\infty -q^3  (-q;q^{30})_\infty (-q^{29};q^{30})_\infty\right)(q^{30};q^{30})_\infty\\
&=  \frac{(-q;q^2)_\infty}{(q^2;q^2)_\infty} (q^3;q^{10})_\infty (q^7;q^{10})_\infty (q^{10};q^{10})_\infty (q^4;q^{20})_\infty (q^{16};q^{20})_\infty.
\eal \]
If we replace $q$ by $q^{1/2}$, we find
\[  \sum_{n\ge 0} \frac{q^{\frac 32 n^2+ n} (-q^{\frac 12})_{n+1}}{(q)_{2n+1}} =q^{-\frac{17}{240}} \frac{\theta_{5,\frac 32} (z) \theta_{5,2} (2z) \eta(z)}{\eta (z/2) \eta(2z) \eta(10z)},\]
which gives the desired result.
\end{proof}

In~\cite{Zagier} one can find a list of triples $(A,B,C)$ for $r=2$ (Table~2 on p.\ 47) for which numerical experiments show that the condition~(iii) of Corollary~\ref{modcor} holds, as well as~(ii). We see that the cases of Theorem~\ref{afam} with $a=1$ are on this list, but the ones with $a=3/4$ appear to be new. We will come back to the case $a=3/4$ in the next section. 

Similar analysis for the other two families in~\eqref{fam} gave the following results. In both cases if the matrix in the family is diagonal then the modular forms are products of the ones from Theorem~\ref{r1}. Non-diagonal cases are listed in Tables~\ref{table:a2} and~\ref{table:a1}.

\begin{table}[ht]
\caption{ A complete list of modular triples $(A,B,C)$ with the matrix} $A=\begin{pmatrix}a&2-a\\2-a&a\end{pmatrix}$, $a>1$, $a\ne 2$. 
\centering
\begin{tabular}{llr|c}
&&&\\
A&B&C&$F_{A,B,C}(e^{2\pi i z})$\\
\hline
&&&\\
$\begin{pmatrix}\frac43&\frac23\\\frac23&\frac43\end{pmatrix}$&$\begin{pmatrix}0\\0\end{pmatrix}$&$-\frac1{30}$&? $\bal\frac1{\eta(z)}&\underset{n \in \Z}\sum (-1)^n\bigl(2q^{\frac{15}2(n+\frac3{10})^2}\\&+q^{\frac{15}2(n+\frac1{30})^2}-q^{\frac{15}2(n+\frac{11}{30})^2}\bigr)\eal$\\
&&&\\
&$\begin{pmatrix}-\frac23\\-\frac13\end{pmatrix}$ and $\begin{pmatrix}-\frac13\\-\frac23\end{pmatrix}$&$\frac1{30}$&? $\bal\frac1{\eta(z)}&\underset{n \in \Z}\sum (-1)^n\bigl(2q^{\frac{15}2(n+\frac1{10})^2}\\&+q^{\frac{15}2(n+\frac{13}{30})^2}-q^{\frac{15}2(n+\frac{23}{30})^2}\bigr)\eal$\\
&&&\\
\hline
&&&\\
$\begin{pmatrix}\frac32&\frac12\\\frac12&\frac32\end{pmatrix}$&$\begin{pmatrix}\frac14\\-\frac14\end{pmatrix}$ and $\begin{pmatrix}-\frac14\\\frac14\end{pmatrix}$&$-\frac1{120}$&$\theta_{5,1}(\frac z2)/\eta(\frac z2)$\\
&&&\\
&$\begin{pmatrix}\frac14\\\frac34\end{pmatrix}$ and $\begin{pmatrix}\frac34\\\frac14\end{pmatrix}$&$\frac{11}{120}$&$\theta_{5,2}(\frac z2)/\eta(\frac z2)$\\
\end{tabular}
\label{table:a2}
\end{table}
In Table~\ref{table:a2}, the identities for the case $a=3/2$ follow directly by applying Theorem~\ref{tr} with $m=2$ and $A=2$, and using identities from Theorem~\ref{r1}. For the case $a=4/3$ we were unable to find a proof, but we verified them to a high order in the power series in $q$.

\begin{table}[ht]
\caption{The list containing all $(B,C)$ such that $F_{A,B,C}$ is modular, where} $A=\begin{pmatrix}a&1-a\\1-a&a\end{pmatrix}$, $a>\frac12$, $a \ne 1$.
\centering
\begin{tabular}{ll|c}
&&\\
B&C&$F_{A,B,C}(e^{2\pi i z})$\\
\hline
&&\\
$\begin{pmatrix}b\\-b\end{pmatrix}$&$\dfrac{b^2}{2a}-\dfrac1{24}$&$\frac1{\eta(z)}\underset{n\in\Z+\frac ba}\sum q^{an^2/2}$\\
&&\\
$\begin{pmatrix}-\frac12\\-\frac12\end{pmatrix}$&$\dfrac1{8a}-\dfrac1{24}$&$\frac2{\eta(z)}\underset{n\in\Z+\frac 1{2a}}\sum q^{an^2/2}$\\
&&\\
$\begin{pmatrix}1-\frac a2\\\frac a2\end{pmatrix}$ and $\begin{pmatrix}\frac a2\\1-\frac a2\end{pmatrix}$ &$\dfrac a8-\dfrac1{24}$&$\frac1{2\eta(z)}\underset{n\in\Z+\frac 12}\sum q^{an^2/2}$
\end{tabular}
\label{table:a1}
\end{table}
In Table~\ref{table:a1}, the identity for $B=\left(\begin{smallmatrix}b\\-b\end{smallmatrix}\right)$ is given in~\cite{Zagier} (see (26) in Chapter 2). The proof uses that for any $n\in\Z$
\be \underset{k-l=n}{\sum_{k,l\ge 0}} \frac{q^{kl}}{(q)_k (q)_l} =\frac1{(q)_\infty}.\label{jac}\ee
The identity for $B=\left(\begin{smallmatrix}-\frac12\\-\frac12\end{smallmatrix}\right)$ is proven similarly, using 
\[ \underset{k-l=n}{\sum_{k,l\ge 0}} \frac{q^{kl-\frac12k -\frac12l}}{(q)_k (q)_l} =\frac{q^{n/2}+q^{-n/2}}{(q)_\infty},\]
for all $n\in\Z$. This identity follows directly from~\eqref{jac}:
\[ \bal \underset{k-l=n}{\sum_{k,l\ge 0}} \frac{q^{kl-\frac12k -\frac12l}}{(q)_k (q)_l} &=  \underset{k-l=n}{\sum_{k,l\ge 0}} \frac{q^{kl-\frac12k -\frac12l}\left((1-q^k)+q^k\right)}{(q)_k (q)_l}
\\
&=  \underset{k-l=n}{\sum_{k\ge 1,l\ge 0}} \frac{q^{kl-\frac12k -\frac12l}}{(q)_{k-1} (q)_l} +  \underset{k-l=n}{\sum_{k,l\ge 0}} \frac{q^{kl+\frac12k -\frac12l}}{(q)_k (q)_l}.\eal
\]
If we replace $k$ by $k+1$ in the first sum on the RHS, we see that it equals $q^{-n/2}/(q)_\infty$ and the second sum equals $q^{n/2}/(q)_\infty$.

To get the identity for $B=\left(\begin{smallmatrix}\frac a2\\1-\frac a2\end{smallmatrix}\right)$ we use
\be \underset{k-l=n}{\sum_{k,l\ge 0}} \frac{q^{kl+l}}{(q)_k (q)_l} = \frac1{(q)_\infty} (-1)^n q^{-\frac12n^2-\frac12n} s_n,\label{sums}\ee
with $s_n = \sum_{k\ge n} (-1)^k q^{\frac12k^2+\frac12k} $ (this is easily obtained by checking that both sides satisfy the recursion $b_n+q^{n+1}b_{n+1} =1/(q)_\infty$ and $\lim_{n\rightarrow \infty} b_n =\frac{1}{(q)_\infty}$) to get
\[ F_{A,B,C} (q)= \frac{q^{a/8}}{\eta(z)} \sum_{n\in\Z} q^{(a-1)(n^2+n)/2} s_n.\]
If we replace $n$ by $-n-1$ in the sum and use that $s_{-n-1}=s_{n+1}=s_n -(-1)^n q^{\frac 12 n^2+\frac 12 n}$, we easily get that $\sum_{n\in\Z} q^{(a-1)(n^2+n)/2} s_n=\frac 12 \sum_{n\in \Z} q^{a(n^2+n)/2}$, which gives the desired result.
 
We also checked for each matrix $A$ in Zagier's list for $r=2$ (p.\ 47 in~\cite{Zagier}) if the corresponding list of vectors $B$ is complete. It appears to be complete in all cases except $A=\begin{pmatrix}a&1-a\\1-a&a\end{pmatrix}$. For such matrices only the modular forms in the first row of Table~\ref{table:a1} were known.

\section{Counterexamples to Nahm's conjecture}

The Bloch group $B(K)$ of a field $K$ is an abelian group defined as the quotient of the kernel of the map
\be\lb{d}\bal
 \Z[K^*\setminus 1] \;&\to\; \Lambda^2 K^* \\
x &\mapsto x \wedge (1-x)
\eal\ee
by the subgroup generated by all elements of the form
\be\lb{rel}
 [x]\+[1-x]\,,\quad[x]\+\bigl[\frac 1x\bigr]\,,\quad[x]+[y]+[1-xy]+\bigl[\frac{1-x}{1-xy}\bigr]+\bigl[\frac{1-y}{1-xy}\bigr]\,.
\ee
If $K$ is a number field than $B(K)\otimes_{\Z} \Q \cong K_3(K) \otimes_{\Z} \Q$ and the regulator map is given explicitly on $B(K)$ by
\[\bal
 B(K) &\;\to\; \R^{r_2}\\	
 x &\mapsto (D(\sigma_1(x)), \dots, D(\sigma_{r_2}(x)))
\eal\]
where $r_2$ is the number of pairs of complex conjugate embeddings of $K$ into $\Co$, $\sigma_1,\dots,\sigma_{r_2}$ is any choice of such embeddings from different pairs, and
\[
 D(x) \= \Im\bigl( Li_2(x) + \log(1-x)\log|x| \bigr)
\]
is the Bloch-Wigner dilogarithm function. It vanishes on all combinations in~\eqref{rel}.

Let $(Q_1,\dots,Q_r)$ be an arbitrary solution of the system of algebraic equations~\eqref{ne} in some number field $K$. Then the element $[Q_1]+\dots+[Q_r] \in \Z[K^*\setminus 1]$ belongs to the kernel of~\eqref{d}. Indeed, we have 
\[
 \sum_i Q_i \wedge (1-Q_i) \=  \sum_i Q_i \wedge \prod_j Q_j^{A_{ij}} \= \sum_{i,j} A_{ij} Q_i \wedge Q_j \= 0
\]
because of the symmetry $A_{ij}=A_{ji}$. Hence every solution of~\eqref{ne} defines an element in the Bloch group of the corresponding field.

Recall that there exists the unique solution $(Q_1^0,\dots,Q_r^0)$ of~\eqref{ne} with $Q^0_i \in (0,1)$, and we have used this solution to compute the asymptotics of~\eqref{F} when $q \to 1$. If~\eqref{F} is a modular function then for this solution we have
\be\lb{rat}
 L(Q^0_1)\+\dots\+L(Q^0_r) \in \pi^2\Q
\ee
where $L(x)$ is the Rogers dilogarithm function (condition~(ii) of Corollary~\ref{modcor}). Rogers dilogarithm is defined in $\R$ and takes values in $\pi^2\Q$ on all combinations of real arguments of the form~\eqref{rel}. On the other hand, these are essentially all known functional equations for $L(x)$. Therefore it is very naturally to expect that $[Q^0_1]+\dots+[Q^0_r]$ is torsion in the corresponding Bloch group because of~\eqref{rat}. (It is automatically torsion if the field $\Q(Q^0_1,\dots,Q^0_r)$ is totally real.) Similar reasoning lead Werner Nahm to the following conjecture.

\begin{conjecture} 
For a positive definite symmetric $r\times r$ matrix with rational coefficients $A$ the following are equivalent:
\begin{itemize}
 \item[(i)] The element $[Q_1]+\dots+[Q_r]$ is torsion in the corresponding Bloch group for every solution of~\eqref{ne}.
 \item[(ii)] There exist $B\in\Q^r$ and $C\in\Q$ such that $F_{A,B,C}$ is a modular function.
\end{itemize}
\end{conjecture}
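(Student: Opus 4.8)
The statement is an equivalence, so the plan is to treat the two implications separately, and I expect them to have entirely different flavours. It helps first to put condition~(i) into analytic form: for the number field $K=\Q(Q_1,\dots,Q_r)$, Borel's theorem tells us that the regulator map $B(K)\otimes\Q\to\R^{r_2}$, $x\mapsto\bigl(D(\sigma_1(x)),\dots,D(\sigma_{r_2}(x))\bigr)$, is injective. Hence $[Q_1]+\dots+[Q_r]$ is torsion exactly when $\sum_i D(\sigma(Q_i))=0$ for every complex embedding $\sigma$; the real places would contribute nothing, since the Bloch--Wigner dilogarithm $D$ vanishes on real arguments. This is the concrete form of~(i) I would work with, and it already accounts for the remark that~(i) is automatic when $K$ is totally real.

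For the direction (ii)$\Rightarrow$(i), the real-place information is exactly what the asymptotic machinery supplies: if $F_{A,B,C}$ is modular then Corollary~\ref{modcor}(ii) forces $\sum_i L(Q^0_i)\in\pi^2\Q$ for the distinguished real solution $Q^0\in(0,1)^r$ of Lemma~\ref{Q}, and via $L(x)=Li_2(x)+\tfrac12\log x\log(1-x)$ this says the Rogers-dilogarithm regulator at the real place is a rational multiple of $L(1)=\pi^2/6$. The plan is then to upgrade this single rational value to full Bloch-group torsion. Since the only known source of $\pi^2\Q$-valued combinations of $L$ is the list of functional equations~\eqref{rel} defining the Bloch group, one expects $[Q^0_1]+\dots+[Q^0_r]$ to be torsion; but deducing torsion from the mere rationality of $\sum_i L(Q^0_i)/\pi^2$ is not a theorem, it is essentially a rigidity statement about $L$, and passing from the one real solution to all complex solutions requires controlling the Galois conjugates that $L$ simply does not see. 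I regard this gap as already serious.

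The direction (i)$\Rightarrow$(ii) is the genuinely constructive one, and I expect it to be the main obstacle. One is handed only a $K$-theoretic property of $A$ and must manufacture rational $B,C$ together with a proof that $F_{A,B,C}$ is modular. The necessary conditions of Corollary~\ref{modcor}(iii) yield the infinite polynomial system~\eqref{eqns}; a first attempt would be to show that torsion of $[Q_1]+\dots+[Q_r]$ makes~\eqref{eqns} solvable over $\Q$ in $B$, after which $C$ is fixed by $\gamma=c_1$, exactly as in the analyses leading to Theorems~\ref{r1} and~\ref{afam}. The decisive difficulty, flagged in the warning after Theorem~\ref{r1}, is that solving~\eqref{eqns} is only \emph{necessary}: modularity itself demands an honest $q$-series identity expressing $F_{A,B,C}$ through theta and $\eta$ functions, and the asymptotic data offer no general mechanism for producing such Rogers--Ramanujan/Slater-type identities.

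Because of these gaps I would not expect the equivalence to survive as stated, so searching for counterexamples is where I would actually spend effort. Guided by the analysis above, the most promising target is a matrix $A$ that admits a genuine modular $F_{A,B,C}$ (so~(ii) holds) yet has some solution of~\eqref{ne} sitting at a complex place with $\sum_i D(\sigma(Q_i))\ne0$, so that $[Q_1]+\dots+[Q_r]$ is non-torsion and~(i) fails. Concretely I would extend the Magma ideal computations behind Theorems~\ref{r1} and~\ref{afam} to generate modular triples, and then evaluate the Bloch--Wigner regulator numerically on all Galois conjugates of the solutions to detect a non-vanishing complex-place contribution --- precisely the kind of example the final section is organised to exhibit.
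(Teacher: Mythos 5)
Your assessment is correct and matches the paper exactly: this statement is a conjecture that the paper does not prove but instead refutes, and your proposed strategy --- find a matrix admitting a genuine modular $F_{A,B,C}$ while some complex solution of~\eqref{ne} has non-vanishing Bloch--Wigner regulator, hence a non-torsion Bloch element --- is precisely how the paper's final section produces its counterexamples (e.g.\ $A=\left(\begin{smallmatrix}3/4&-1/4\\-1/4&3/4\end{smallmatrix}\right)$ from Theorem~\ref{afam}, whose system has the solution $Q_1=Q_2=\frac{1+\sqrt{-3}}2$ with $D\bigl(\frac{1+\sqrt{-3}}2\bigr)\ne 0$). The gaps you identify in both implications are the same ones the paper acknowledges when it concludes that the correct formulation of the conjecture is an open question.
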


This conjecture is true in case $r=1$, and there are a lot of examples supporting the Conjecture also for $r>1$ (see \cite{Zagier}). Although examples show that it is not sufficient to require only $[Q^0_1]+\dots+[Q^0_r]$ to be torsion, it doesn't actually follow from anywhere that one should consider all solutions of~\eqref{ne} in~(i). We will see soon that this requirement is indeed too strong. 

As an example, let us consider matrices of the form $A=\begin{pmatrix}a&1-a\\1-a&a\end{pmatrix}$. The corresponding equations are
\[\bcs
1-Q_1 \= Q_1^a \; Q_2^{1-a}\,,\\
1-Q_2 \= Q_1^{1-a} Q_2^a\,,\\
\ecs\]
hence 
\[\bal
&\frac{1-Q_1}{Q_2}\=\Bigl(\frac{Q_1}{Q_2}\Bigr)^a\=\frac{Q_1}{1-Q_2}\,,\\
&(1-Q_1)(1-Q_2)\= Q_1 Q_2\,,\\
&Q_1 + Q_2 \= 1 \quad\Rightarrow\quad [Q_1]+[Q_2]\= 0 \text{ in } B(\Co)\,.
\eal\]
This computation is the same for all values of $a$ and we see from Table~\ref{table:a1} that indeed we have modular functions for every $a$. 

Next, let us look at the table from Theorem~\ref{afam}. One can check that the matrix $A=\begin{pmatrix}1&-1/2\\-1/2&1\end{pmatrix}$ satisfies condition~(i) of the Conjecture. (All solutions of~\eqref{ne} are $(Q_1,Q_2)=(x,x)$ with $1-x=x^{1/2}$.) However, $A=\begin{pmatrix}3/4&-1/4\\-1/4&3/4\end{pmatrix}$ does not satisfy~(i), and so we get a counterexample to Nahm's conjecture, since there do exist corresponding modular functions. Indeed, consider the corresponding equation:
\be\lb{ce}\bcs
1-Q_1 \= Q_1^{3/4} \; Q_2^{-1/4}\,,\\
1-Q_2 \= Q_1^{-1/4} Q_2^{3/4}\,.\\
\ecs\ee
It is algebraic equation in the variables $Q_1^{1/4}$ and $Q_2^{1/4}$. Let $t=Q_1^{1/4}Q_2^{-1/4}$. Then we have from the above equations
\[\bal
 &\frac{1-Q_1}{Q_2^{1/2}}\=t^3 \quad&\Rightarrow\quad Q_2^{1/2}\=t^{-3}(1-Q_1)\,,\\
 &\frac{1-Q_2}{Q_1^{1/2}}\=t^{-3} \quad&\Rightarrow\quad Q_1^{1/2}\=t^3(1-Q_2)\,,\\
\eal\]
and we substitute these equalities into $Q_1^{1/2}=t^2 Q_2^{1/2}$ to get
\[\bal
 &t^3(1-Q_2)\=t^2 t^{-3}(1-Q_1)\,,\\
 &t^4(1-Q_2)\=1\-Q_1\=1\-t^4 Q_2\,,\\
 &t^4\=1\,.\\
\eal\]
Consequently, all solutions of~\eqref{ce} are $(Q_1,Q_2)\=(x,x)$ where $x$ is a solution of
$1 \- x \= t x^{1/2}$ for a 4th root of unity $t^4=1$. Equivalently,
\[
 (1-x)^4\=x^2 \quad\Leftrightarrow\quad (x^2 - 3x + 1)(x^2 - x + 1)\= 0\,.
\]
We see that $(Q_1,Q_2)=\bigl(\frac{1+\sqrt{-3}}2,\frac{1+\sqrt{-3}}2\bigr)$ is a solution of~\eqref{ce}, and the corresponding element $2\bigl[\frac{1+\sqrt{-3}}2\bigr]$ is not torsion because $D\bigl(\frac{1+\sqrt{-3}}2\bigr) = 1.01494...$. Here $D$ is the Bloch-Wigner dilog (see \cite[Chapter I, Section 3]{Zagier}) for which it is known that $D(x)=0$ if and only if $x\in\R$.

A similar thing happens in Table~\ref{table:a2}: the matrix $A=\begin{pmatrix}4/3&2/3\\2/3&4/3\end{pmatrix}$ satisfies the Conjecture while  $A=\begin{pmatrix}3/2&1/2\\1/2&3/2\end{pmatrix}$ is a counterexample. So far we have two counterexamples, and we notice that both matrices match into the following general pattern. 

\begin{theorem}\label{tr} Let $A$ be a real positive definite symmetric $r \times r$-matrix, $B$ a vector of length $r$, and $C$ a scalar. For an arbitrary $m\ge1$ we define
\[
 A'\= {\rm I}_{mr} +{\rm E}_m \otimes (A-{\rm I}_r)\,,\quad B'\=l_{mr} +e_m \otimes (B-l_r)\,, \quad C'\=C/m,
\]
where $E_m\in M_{m\times m} (\Q)$ such that $(E_m)_{ij}= 1/m$, $e_m\in \Q^m$ such that $(e_m)_i=1/m$ and $l_r\in\Q^r$ such that $(l_r)_i=\frac{2i-r-1}{2r}$.
Then
\[
 F_{A',B',C'}(q)\=F_{A,B,C}(q^{1/m})\,.
\]
\end{theorem}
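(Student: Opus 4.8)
The plan is to expand $F_{A',B',C'}(q)$ directly, exploit the tensor structure of the data $(A',B',C')$ to reorganize the sum, and thereby reduce the statement to a single one-variable $q$-series identity that I then settle using~\eqref{quant}. First I would index the summation variable $N\in(\Z_{\ge0})^{mr}$ by pairs $(k,i)$ with $1\le k\le m$, $1\le i\le r$, so that the tensor factor $E_m$ carries the index $k$ and $A-\mathrm{I}_r$ the index $i$. Then $A'_{(k,i),(l,j)}=\delta_{kl}\delta_{ij}+\tfrac1m(A_{ij}-\delta_{ij})$, and a short computation of $B'$ collapses the terms coming from $l_{mr}$ and $l_r$ to produce the clean formula $B'_{(k,i)}=(l_m)_k+\tfrac1m B_i$, where $(l_m)_k=\tfrac{2k-m-1}{2m}$.

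Writing $n_i=\sum_{k=1}^m N_{(k,i)}$, I would check that the exponent of $q$ splits as
\[
\tfrac12 N^TA'N\+N^TB'\+C' \= \tfrac1m\bigl(\tfrac12 n^TAn\+n^TB\+C\bigr)\+\sum_{i=1}^r E_i,
\]
where $E_i=\tfrac12\sum_k N_{(k,i)}^2-\tfrac1{2m}n_i^2+\sum_k (l_m)_k N_{(k,i)}$ depends only on the block $(N_{(1,i)},\dots,N_{(m,i)})$: the cross terms $A_{ij}$ reassemble into $\tfrac1m n^TAn$ precisely because every entry of $E_m$ equals $1/m$, while the diagonal correction $-\tfrac1m\delta_{ij}$ yields the $-\tfrac1{2m}n_i^2$. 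Since the denominator factors as $\prod_i\prod_k (q)_{N_{(k,i)}}$, grouping the sum by $n$ rewrites $F_{A',B',C'}(q)$ as $\sum_n (q^{1/m})^{\frac12 n^TAn+n^TB+C}\prod_i S_i(n_i)$, where $S_i(n_i)$ is the inner block sum and the prefactor already matches the general term of $F_{A,B,C}(q^{1/m})$.

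It then remains to prove, for every $n\ge0$, the one-variable identity
\[
\sum_{a_1+\dots+a_m=n}\frac{q^{\frac12\sum_k a_k^2+\sum_k (l_m)_k a_k-\frac{n^2}{2m}}}{(q)_{a_1}\cdots(q)_{a_m}} \= \frac1{(q^{1/m})_n},
\]
for then $S_i(n_i)=1/(q^{1/m})_{n_i}$ and the product collapses exactly to $F_{A,B,C}(q^{1/m})$. To establish it I would form $\Phi(x)=\sum_{a_1,\dots,a_m\ge0}\frac{q^{\frac12\sum a_k^2+\sum (l_m)_k a_k}}{\prod_k(q)_{a_k}}\,x^{\sum a_k}$ and apply~\eqref{quant} separately in each variable $a_k$ (with $x$ rescaled by $q^{(l_m)_k}$), obtaining $\Phi(x)=\prod_{k=1}^m(-x\,q^{(2k-1)/(2m)};q)_\infty$ because $(l_m)_k+\tfrac12=\tfrac{2k-1}{2m}$.

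The crux is the observation that as $k$ runs over $1,\dots,m$ and $j$ over $\Z_{\ge0}$, the exponents $\tfrac{2k-1}{2m}+j=\tfrac{(2k-1)+2mj}{2m}$ run exactly once over $\tfrac{d}{2m}$ for all positive odd $d$. Hence the double product collapses to $\Phi(x)=(-x\,q^{1/(2m)};q^{1/m})_\infty$, and a final application of~\eqref{quant} with base $q^{1/m}$ gives $\Phi(x)=\sum_{n\ge0}\frac{q^{n^2/(2m)}}{(q^{1/m})_n}x^n$; comparing coefficients of $x^n$ in the two expressions for $\Phi(x)$ yields the identity. I expect the main work to lie not in this generating-function step, which is short, but in the careful bookkeeping of the exponent split—in particular verifying the simplification $B'_{(k,i)}=(l_m)_k+B_i/m$ and confirming that all cross terms reorganize into the advertised per-coordinate form.
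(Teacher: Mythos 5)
Your proof is correct and is essentially the paper's argument run in reverse: you expand $F_{A',B',C'}(q)$ and collapse each block of $m$ summation variables via the identity $\sum_{a_1+\cdots+a_m=n} q^{\frac12\sum_k a_k^2+\sum_k (l_m)_k a_k}\big/\prod_k(q)_{a_k} = q^{n^2/(2m)}/(q^{1/m})_n$, which is precisely the paper's key identity (after $q\mapsto q^{1/m}$), and your generating-function derivation of it — applying \eqref{quant} in each variable and regrouping the odd exponents $\frac{2k-1+2mj}{2m}$ — is the same device as the paper's use of the factorization $(-xq^{1/2};q)_\infty=\prod_{k=1}^m(-xq^{k-1/2};q^m)_\infty$. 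Your bookkeeping check $B'_{(k,i)}=(l_m)_k+B_i/m$ is correct and corresponds to the paper's verification that $rl_m\otimes e_r=l_{mr}-e_m\otimes l_r$.
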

\begin{proof}
The proof relies on the following identity
\[ \frac{q^{\frac{1}{2} n^2}}{(q)_n} =\underset{k_1+\ldots +k_m=n}{\sum_{k \in (\Z_{\ge0})^m}} \frac{q^{\frac{m}{2}k^T k +m l_m^T k}}{(q^m;q^m)_{k_1} \cdots (q^m;q^m)_{k_m}} \]
which holds for all $n\ge 0$. It follows directly if we use~\eqref{quant} on both sides in the trivial identity
\[ (-xq^{1/2};q)_\infty = (-xq^{1/2};q^m)_\infty (-xq^{3/2};q^m)_\infty \cdots (-xq^{m-1/2};q^m)_\infty,\]
and compare the coefficient of $x^n$ on both sides.

Using the identity we find
\begin{equation*}
\bal
F_{A,B,C}(q) &= \sum_{n\in (\Z_{\ge0})^r} \frac{q^{\frac 12 n^T An + n^T B + C} }{(q)_{n_1}\dots (q)_{n_r}}\\
&= \sum_{n\in (\Z_{\ge0})^r} q^{\frac 12 n^T (A-{\rm I}_r)n + n^T B + C} \underset{mK e_m=n}{\sum_{K\in M_{r\times m}(\Z_{\ge0})}} \frac{q^{\frac{m}{2} ||K||^2 +mr e_r^TK l_m}}{(q^m;q^m)_K},
\eal
\end{equation*}
where $||K||^2 =\sum_{i=1}^r \sum_{j=1}^m K_{ij}^2$ and $(q;q)_K =\prod_{i=1}^r \prod_{j=1}^m (q;q)_{K_{ij}}$. Now changing the order of summation we get that this equals
\[ \sum_{K\in M_{r\times m}(\Z_{\ge0})}\frac{ q^{\frac{m^2}{2}e_m^TK^T(A-{\rm I}_r)Ke_m+ \frac{m}{2} ||K||^2 +me_m^T K^T B+mr e_r^TK l_m+C}}{(q^m;q^m)_K}.
\]
If we turn the $r\times m$ matrix $K$ into a vector of length $rm$ by putting the columns of $K$ under each other, we can recognize this last sum as $F_{A',B'',C'} (q^m)$, where $A'$ and $C'$ are as in the theorem and $B''= e_m \otimes B +rl_m \otimes e_r$. We can easily verify that
\[ rl_m \otimes e_r = l_{mr} -e_m\otimes l_r,\]
which gives $B''=B'$, with $B'$ as in the theorem. So we have found
\[ F_{A,B,C} (q) = F_{A',B',C'} (q^m).\]
Now replacing $q$ by $q^{1/m}$ gives the desired result.
\end{proof}
 
Let us take $r=1$ and $m=2$. Then
\[\bal
&A\=\frac12 \quad\rightsquigarrow\quad A'\=\begin{pmatrix}3/4&-1/4\\-1/4&3/4\end{pmatrix}\\
&A\=2 \quad\rightsquigarrow\quad A'\=\begin{pmatrix}3/2&1/2\\1/2&3/2\end{pmatrix}\\
\eal\]
and the theorem produces modular functions for these $2\times2$ matrices from the ones known for $r=1$. One can construct more counterexamples with higher $r$ using Theorem~\ref{tr}.

Finally, we would like to give one more counterexample, this time such that $A$ has integer entries. Let 
\[
A\=\begin{pmatrix}3&1&1&0\\1&3&0&1\\1&0&1&0\\0&1&0&1\end{pmatrix}\,,\quad B\=\frac12\begin{pmatrix}1\\-1\\1\\1\end{pmatrix}\,,\quad C\=\frac1{15}\,.
\]
All solutions of~\eqref{ne} in this case are 
\[
(Q_1,Q_2,Q_3,Q_4) \=\Bigl (u,u,\frac1{1+u},\frac1{1+u}\Bigr) \;\text{ with}\; 1-u^2=u^4
\]
and
\[
(Q_1,Q_2,Q_3,Q_4) \=\Bigl (u,-u,\frac1{1+u},\frac1{1-u}\Bigr) \;\text{ with}\; 1-u^2=-u^4\,.
\]
It is easy to check that solutions of the first type give torsion elements in the Bloch group, while ones of the second type give non-torsion elements. 
On the other hand, we have that
\[
F_{A,B,C}(q)=\frac{\eta(2z)^2\theta_{5,1}(z)}{\eta(z)^3}.
\]
We get this identity by applying the theorem below to $A=\left(\begin{smallmatrix} 3/2&1/2\\1/2&3/2\end{smallmatrix}\right)$, $B= \left(\begin{smallmatrix}1/4\\ -1/4\end{smallmatrix}\right)$ and $C=-1/120$, and using the identity for this case given in Table~\ref{table:a2}.
\begin{theorem}
Let $A$ be a real positive definite symmetric $r \times r$-matrix, $B$ a vector of length $r$, and $C$ a scalar. Let $A'$, $B'$ and $C'$ be the symmetric $2r \times 2r$-matrix, the vector of length $2r$ and the scalar, resp.\ , given by
\[
A' = \begin{pmatrix} 2A& {\rm I}_r\\ {\rm I}_r & {\rm I}_r\end{pmatrix}\,, \quad B' = \begin{pmatrix} 2B\\ \frac{1}{2} \\ \vdots \\ \frac{1}{2}\end{pmatrix}\,, \quad C'=2C+\frac{r}{24},
\]
then
\[ F_{A',B',C'} (q)= \frac{\eta(2z)^r}{\eta(z)^r} F_{A,B,C}(q^2).\]
\end{theorem}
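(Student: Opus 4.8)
The plan is to split the $2r$-dimensional summation index of $F_{A',B',C'}$ into two blocks of length $r$ and then carry out the sum over the second block using the quantum dilogarithm identity~\eqref{quant}, exactly in the spirit of the proof of Theorem~\ref{tr}. Writing the index as $(n,k)$ with $n,k\in(\Z_{\ge0})^r$, the block shape of $A'$, $B'$ and $C'$ gives, by a direct computation,
\[
\tfrac12(n,k)^T A'(n,k)+(n,k)^T B'+C'
=\bigl(n^TAn+2n^TB+2C\bigr)+\sum_{i=1}^r\Bigl(\tfrac12 k_i^2+(n_i+\tfrac12)k_i\Bigr)+\tfrac{r}{24}\,.
\]
After pulling out $q^{2C+r/24}$ the summand factorises: the $n$-part reproduces the summand of $F_{A,B,C}(q^2)$, while the $k$-sum splits into a product of $r$ one-variable sums.

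First I would evaluate each inner sum. Applying~\eqref{quant} with $x=q^{n_i+1/2}$ yields
\[
\sum_{k_i\ge0}\frac{q^{\frac12 k_i^2+(n_i+\frac12)k_i}}{(q)_{k_i}}=(-q^{\,n_i+1};q)_\infty\,.
\]
Next I would rewrite these products in base $q^2$. Using the Euler identities $(-q;q)_\infty=(q^2;q^2)_\infty/(q)_\infty$ and $(-q;q)_{n_i}=(q^2;q^2)_{n_i}/(q)_{n_i}$, one gets
\[
(-q^{\,n_i+1};q)_\infty=\frac{(-q;q)_\infty}{(-q;q)_{n_i}}
=\frac{(q^2;q^2)_\infty}{(q)_\infty}\cdot\frac{(q)_{n_i}}{(q^2;q^2)_{n_i}}\,.
\]

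The factor $(q)_{n_i}$ produced here cancels the $(q)_{n_i}$ already present in the denominator of the $n$-summand, replacing it by $(q^2;q^2)_{n_i}$; collecting the global constants leaves $q^{r/24}\,(q^2;q^2)_\infty^r/(q)_\infty^r\cdot F_{A,B,C}(q^2)$. Since $\eta(z)=q^{1/24}(q)_\infty$ and $\eta(2z)=q^{1/12}(q^2;q^2)_\infty$, this prefactor is precisely $\eta(2z)^r/\eta(z)^r$, which gives the claim.

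There is no genuine obstacle here: the argument is bookkeeping once one notices that the off-diagonal blocks ${\rm I}_r$ of $A'$ together with the half-integer entries of $B'$ are arranged so that the $k$-sum is a quantum dilogarithm contracting $q$ to $q^2$ and supplying the $\eta$-quotient. The only point needing care is the constant: the shift $r/24$ in $C'$ is exactly what matches the $q^{1/12}$ of each $\eta(2z)$ against the $q^{1/24}$ of each $\eta(z)$, so all powers of $q$ in the prefactor balance.
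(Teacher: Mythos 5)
Your proposal is correct and is essentially the paper's own proof read in the opposite direction: the paper starts from $\frac{(q^2;q^2)_\infty^r}{(q)_\infty^r}F_{A,B,C}(q^2)$ and expands each $\frac{(q^2;q^2)_\infty}{(q)_\infty(q^2;q^2)_n}$ via $(-q^{n+1};q)_\infty$ and~\eqref{quant} into the $k$-sum, whereas you start from $F_{A',B',C'}(q)$, perform the $k$-sum with~\eqref{quant} and then contract $(-q^{n_i+1};q)_\infty$ back using the same Euler identities. The block computation of the exponent and the bookkeeping of the constant $r/24$ both check out.
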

\begin{proof}
Using $(q^2;q^2)_n =(q;q)_n (-q;q)_n$, $(q^2;q^2)_\infty =(q;q)_\infty (-q;q)_\infty$ and \eqref{quant}, we see that
\[ 
\frac{(q^2;q^2)_\infty}{(q;q)_\infty} \frac{1}{(q^2;q^2)_n} = \frac{(-q;q)_\infty}{(q;q)_n (-q;q)_n} = \frac{(-q^{n+1};q)_\infty}{(q;q)_n} = \frac{1}{(q)_n} \sum_{k\ge 0} \frac{q^{\frac{1}{2}k^2+\frac{1}{2}k +nk}}{(q)_k},
\]
and so 
\begin{equation*}
\bal
\frac{(q^2;q^2)_\infty^r}{(q)_\infty^r}& F_{A,B,C}(q^2) \\
& = \sum_{n\in (\Z \ge 0)^r} \frac{q^{n^TAn+2n^TB+2C}}{(q)_{n_1} \cdots (q)_{n_r}}\sum_{k\in (\Z \ge 0)^r} \frac{q^{\frac{1}{2}k^Tk +n^Tk +\frac{1}{2}(k_1+k_2+\ldots +k_r)}}{(q)_{k_1}\cdots (q)_{k_r}}\\
&= \sum_{n,k\in (\Z \ge 0)^r} \frac{q^{n^TAn +\frac{1}{2}k^Tk +n^Tk +2n^T B +\frac{1}{2}(k_1+k_2+\ldots +k_r)+2C}}{(q)_{n_1} \cdots (q)_{n_r}(q)_{k_1} \cdots (q)_{k_r}}.
\eal
\end{equation*}
If we turn the two vectors $n$ and $k$ into one vector of length $2r$ by putting $k$ below $n$, we can recognize this last sum as $q^{-r/24} F_{A',B',C'} (q)$, where $A'$, $B'$ and $C'$ are as in the theorem. So we have found
\[ \frac{(q^2;q^2)_\infty^r}{(q)_\infty^r} F_{A,B,C}(q^2) = q^{-r/24} F_{A',B',C'} (q).\]
Multiplying both sides by $q^{r/24}$ gives the desired result.
\end{proof}

\end{document}